\newtheoremstyle{dotless}{}{}{\itshape}{}{\bfseries}{}{}{}
\theoremstyle{dotless}
\theoremstyle{plain}
\newtheorem{thm}{Theorem}[section]
\newtheorem{lem}[thm]{Lemma}
\newtheorem{prop}[thm]{Proposition}
\newtheorem{cor}[thm]{Corollary}
\theoremstyle{definition}
\newtheorem{defn}[thm]{Definition}
\newtheorem{rem}[thm]{Remark}
\newcommand{\N} {\mathbb{N}}
\newcommand{\Z} {\mathbb{Z}}
\newcommand{\R} {\mathbb{R}}
\newcommand{\C} {\mathbb{C}}
\newcommand{\K} {\mathbb{K}}
\newcommand{\D} {\mathbb{D}}
\newcommand{\acx} {\operatorname{acx}}
\newcommand{\oacx} {\overline{\operatorname{acx}}}
\DeclareMathOperator{\id}{id}
\DeclareMathOperator{\Span}{span}
\providecommand{\differential}{\mathrm{d}}
\renewcommand{\d}{\differential}
\begin{document}

\title[Series representations]{Series representations in spaces of vector-valued functions via Schauder decompositions}
\author[K.~Kruse]{Karsten Kruse}
\address{TU Hamburg \\ Institut f\"ur Mathematik \\
Am Schwarzenberg-Campus~3 \\
Geb\"aude E \\
21073 Hamburg \\
Germany}
\email{karsten.kruse@tuhh.de}

\subjclass[2010]{Primary 46E40, Secondary 46A32, 46E10}

\keywords{series representation, Schauder basis, Schauder decomposition, vector-valued function, injective tensor product}

\date{\today}
\begin{abstract}
It is a classical result that every $\C$-valued holomorphic function has a local power series representation. 
This even remains true for holomorphic functions with values in a locally complete locally convex Hausdorff space $E$ over 
$\C$. Motivated by this example we try to answer the following question. Let $E$ be a locally convex Hausdorff space 
over a field $\K$, $\mathcal{F}(\Omega)$ be a locally convex Hausdorff space of $\K$-valued functions on a set $\Omega$
and $\mathcal{F}(\Omega,E)$ be an $E$-valued counterpart of $\mathcal{F}(\Omega)$ 
(where the term $E$-valued counterpart needs clarification itself). 
For which spaces is it possible to lift series representations of elements of $\mathcal{F}(\Omega)$ to elements 
of $\mathcal{F}(\Omega,E)$?  
We derive sufficient conditions for the answer to be affirmative using Schauder decompositions 
which are applicable for many classical spaces of functions 
$\mathcal{F}(\Omega)$ having an equicontinuous Schauder basis. 
\end{abstract}
\maketitle
\section{Introduction}
The purpose of this paper is to lift series representations known from scalar-valued functions to vector-valued functions 
and its underlying idea was derived from the classical example of the (local) power series representation of a holomorphic function. 
Let $\D_{r}\subset\C$ be an open disc around zero with radius $r>0$ and $\mathcal{O}(\D_{r})$ 
be the space of holomorphic functions on $\D_{r}$, i.e.\ the space of functions $f\colon\D_{r}\to \C$ such that 
the limit
\begin{equation}\label{intro:holom}
 f^{(1)}(z):=\lim_{\substack{h\to 0\\ h\in\C,h\neq 0}}\frac{f(z+h)-f(z)}{h},\quad z\in \D_{r},
\end{equation}
exists in $\C$. It is well-known that every $f\in\mathcal{O}(\D_{r})$ can be written as 
\[
 f(z)=\sum_{n=0}^{\infty}\frac{f^{(n)}(0)}{n!}z^{n},\quad z\in\D_{r},
\]
where the power series on the right-hand side converges uniformly on every compact subset of $\D_{r}$ and 
$f^{(n)}(0)$ is the $n$-th complex derivative of $f$ at $0$ which is defined from \eqref{intro:holom} by the recursion $f^{(0)}:=f$ and 
$f^{(n)}:=(f^{(n-1)})^{(1)}$ for $n\in\N$. Amazingly by \cite[2.1 Theorem and Definition, p.\ 17-18]{grosse-erdmann1992} and 
\cite[5.2 Theorem, p.\ 35]{grosse-erdmann1992}, this series representation remains valid if 
$f$ is a holomorphic function on $\D_{r}$ with values in a locally complete locally convex Hausdorff space $E$ over $\C$ 
where holomorphy means that the limit \eqref{intro:holom} exists in $E$ and the higher complex derivatives are defined 
recursively as well. 
Analysing this example, we observe that $\mathcal{O}(\D_{r})$, equipped with the topology of uniform convergence on 
compact subsets of $\D_{r}$, is a Fr\'{e}chet space, in particular barrelled, with a Schauder basis formed 
by the monomials $z\mapsto z^{n}$. Further, the formulas for the complex derivatives of a 
$\C$-valued resp.\ an $E$-valued function $f$ on $\D_{r}$ 
are built up in the same way by \eqref{intro:holom}.

Our goal is to derive a mechanism which uses these observations and transfers known series representations for 
other spaces of scalar-valued functions to their vector-valued counterparts. Let us describe the general setting. 
We recall from \cite[14.2, p.\ 292]{Jarchow} that a sequence $(f_{n})$ in a locally convex Hausdorff space $F$ over a 
field $\K$ is called a topological basis, 
or simply a basis, if for every $f\in F$ there is a unique sequence of coefficients 
$(\lambda^{\K}_{n}(f))$ in $\K$ such that 
\begin{equation}\label{intro:basis}
f=\sum_{n=1}^{\infty}\lambda^{\K}_{n}(f)f_{n}
\end{equation}
where the series converges in $F$. Due to the uniqueness of the coefficients the map 
$\lambda^{\K}_{n}\colon f\mapsto \lambda^{\K}_{n}(f)$ is well-defined, linear and 
called the $n$-th coefficient functional associated to $(f_{n})$. Further, for each $k\in\N$ the 
map $P^{\K}_{k}\colon F\to F$, $P^{\K}_{k}(f):=\sum_{n=1}^{k}\lambda^{\K}_{n}(f)f_{n}$, 
is a linear projection whose range is $\operatorname{span}(f_{1},\ldots,f_{n})$ and it is called 
the $k$-th expansion operator associated to $(f_{n})$. 
A basis $(f_{n})$ of $F$ is called equicontinuous if the expansion operators $P^{\K}_{k}$ 
form an equicontinuous sequence in the linear space $L(F,F)$ of continuous linear maps from $F$ to $F$ 
(see \cite[14.3, p.\ 296]{Jarchow}). 
A basis $(f_{n})$ of $F$ is called a Schauder basis if the coefficient functionals are continuous 
which, in particular, is already fulfilled if $F$ is a Fr\'{e}chet space by 
\cite[Corollary 28.11, p.\ 351]{meisevogt1997}. If $F$ is barrelled, then 
a Schauder basis of $F$ is already equicontinuous and $F$ has the (bounded) approximation property 
by the uniform boundedness principle.

The starting point for our approach is equation \eqref{intro:basis} . 
Let $F$ and $E$ be locally convex Hausdorff spaces over a field $\K$ where $F$ has an equicontinuous Schauder basis $(f_{n})$ 
with associated coefficient functionals $(\lambda^{\K}_{n})$.
The expansion operators $(P^{\K}_{k})$ form a so-called Schauder decomposition of $F$ (see \cite[p.\ 77]{Bonet2007}), i.e.\ 
they are continuous projections on $F$ such that
\begin{enumerate}
 \item [(i)] $P^{\K}_{k}P^{\K}_{j}=P^{\K}_{\min(j,k)}$ for all $j,k\in\N$,
 \item [(ii)] $P^{\K}_{k}\neq P^{\K}_{j}$ for $k\neq j$,
 \item [(iii)] $(P^{\K}_{k}f)$ converges to $f$ for each $f\in F$.
\end{enumerate}
This operator theoretic definition of a Schauder decomposition is equivalent to the usual definition in terms of closed subspaces of $F$ 
given in \cite[p.\ 377]{kalton_1970} (see \cite[p.\ 219]{Lotz1985}).
In our main \prettyref{thm:schauder_decomp} we prove that $(P^{\K}_{k}\varepsilon\id_{E})$ is a Schauder decomposition 
of the Schwartz' $\varepsilon$-product $F\varepsilon E :=L_{e}(F_{\kappa}',E)$ 
and each $u\in F\varepsilon E$ has the series representation
\[
u(f')=\sum_{n=1}^{\infty}u(\lambda_{n}^{\K})f'(f_{n}),\quad f'\in F'.
\]
Now, suppose that $F=\mathcal{F}(\Omega)$ is a space of $\K$-valued functions on a set $\Omega$ with a 
topology such that the point-evaluation functionals $\delta_{x}$, $x\in\Omega$, are continuous 
and that there is a locally convex Hausdorff space $\mathcal{F}(\Omega,E)$ of 
functions from $\Omega$ to $E$ such that the map 
\[
S\colon \mathcal{F}(\Omega)\varepsilon E\to \mathcal{F}(\Omega,E),\; u\longmapsto [x\mapsto u(\delta_{x})],
\]  
is a (linear topological) isomorphism.
Assuming that for each $n\in\N$ and $u\in\mathcal{F}(\Omega)\varepsilon E$ there is 
$\lambda^{E}_{n}(S(u))\in E$ with
\begin{equation}\label{eq:intro_consistent}
\lambda^{E}_{n}(S(u))=u(\lambda^{\K}_{n}),
\end{equation}
we obtain in \prettyref{cor:schauder_decomp} that $(S\circ (P_{k}^{\K}\varepsilon\id_{E})\circ S^{-1})_{k}$ is a Schauder decomposition of $\mathcal{F}(\Omega,E)$ and
\[
f=\lim_{k\to\infty}(S\circ (P_{k}^{\K}\varepsilon\id_{E})\circ S^{-1})_{k}(f)=\sum_{n=1}^{\infty}\lambda^{E}_{n}(f)f_{n},\quad f\in\mathcal{F}(\Omega,E),
\]
which is the desired series representation in $\mathcal{F}(\Omega,E)$. 
Condition \eqref{eq:intro_consistent} might seem strange at a first glance 
but for example in the case of $E$-valued holomorphic functions 
on $\D_{r}$ it guarantees that the complex derivatives at $0$ appear in the Schauder decomposition of $\mathcal{O}(\D_{r},E)$ 
since $S(u)^{(n)}(0)=u(\delta_{0}^{(n)})$ for all $u\in\mathcal{O}(\D_{r})\varepsilon E$ and $n\in\N_{0}$ 
where $\delta_{0}^{(n)}$ is the point-evaluation of the $n$-th complex derivative.
We apply our result to sequence spaces, spaces of continuously differentiable functions on 
a compact interval, the space of holomorphic functions, the Schwartz space and the space of 
smooth functions which are $2\pi$-periodic in each variable. 

As a byproduct of \prettyref{thm:schauder_decomp} we obtain  
that every element of the completion $F\,\widehat{\otimes}_{\varepsilon}E$ of the injective tensor product 
has a series representation as well if $F$ is a complete space with an equicontinuous Schauder basis and $E$ is complete. 
Concerning series representation in $F\,\widehat{\otimes}_{\varepsilon}E$, little seems to be known
whereas for the completion $F\,\widehat{\otimes}_{\pi}E$ of the projective tensor product $F\otimes_{\pi}E$ 
of two metrisable locally convex spaces $F$ and $E$ it is well-known that every $f\in F\,\widehat{\otimes}_{\pi}E$ 
has a series representation 
\[
f=\sum_{n=1}^{\infty}a_{n}f_{n}\otimes e_{n}
\]
where $(a_{n})\in \ell^{1}$, i.e.\ $(a_{n})$ is absolutely summable, 
and $(f_{n})$ and $(e_{n})$ are null sequences in $F$ and $E$, respectively 
(see e.g.\ \cite[Chap.\ I, \S2 , n$^{\circ}$1, Th\'{e}or\`{e}me 1, p.\ 51]{Gro} 
or \cite[15.6.4 Corollary, p.\ 334]{Jarchow}). 
If $F$ and $E$ are metrisable and one of them is nuclear, then the isomorphy
$F\,\widehat{\otimes}_{\pi}E \cong F\,\widehat{\otimes}_{\varepsilon}E$ holds and we trivially have a series 
representation of the elements of $F\,\widehat{\otimes}_{\varepsilon}E$ as well.
Other conditions on the existence of series representations of the elements of $F\,\widehat{\otimes}_{\varepsilon}E$
can be found in \cite[Proposition 4.25, p.\ 88]{ryan2002}, where $F$ and $E$ are Banach spaces 
and both of them have a Schauder basis, and in \cite[Theorem 2, p.\ 283]{Joiner1970}, where $F$ and $E$ 
are locally convex Hausdorff spaces and both of them have an equicontinuous Schauder basis.
\section{Notation and Preliminaries}
We equip the spaces $\R^{d}$, $d\in\N$, and $\C$ with the usual Euclidean norm $|\cdot|$. 
For a subset $M$ of a topological vector space $X$, we write $\oacx(M)$ 
for the closure of the absolutely convex hull $\acx(M)$ of $M$ in $X$.

By $E$ we always denote a non-trivial locally convex Hausdorff space, in short lcHs, over the field 
$\K=\R$ or $\C$ equipped with a directed fundamental system of 
seminorms $(p_{\alpha})_{\alpha\in \mathfrak{A}}$.
If $E=\K$, then we set $(p_{\alpha})_{\alpha\in \mathfrak{A}}:=\{|\cdot|\}.$ 
We recall that for a disk $D\subset E$, i.e.\ a bounded, absolutely convex set, 
the vector space $E_{D}:=\bigcup_{n\in\N}nD$ becomes a normed space if it is equipped with the
gauge functional of $D$ as a norm (see \cite[p.\ 151]{Jarchow}). The space $E$ is called locally 
complete if $E_{D}$ is a Banach space for every closed disk $D\subset E$ (see \cite[10.2.1 Proposition, p.\ 197]{Jarchow}).
For more details on the theory of locally convex spaces see \cite{F/W/Buch}, \cite{Jarchow} or \cite{meisevogt1997}.

By $X^{\Omega}$ we denote the set of maps from a non-empty set $\Omega$ to a non-empty set $X$, 
by $\chi_{K}$ the characteristic function of a subset $K\subset\Omega$ and by $L(F,E)$ 
the space of continuous linear operators from $F$ to $E$ where $F$ and $E$ are locally convex Hausdorff spaces. 
If $E=\K$, we just write $F':=L(F,\K)$ for the dual space.
If $F$ and $E$ are (linearly topologically) isomorphic, we write $F\cong E$ and, if $F$ is only isomorphic to a subspace of $E$, 
we write $F\tilde{\hookrightarrow} E$.
We denote by $L_{t}(F,E)$ the 
space $L(F,E)$ equipped with the locally convex topology of uniform convergence 
on the absolutely convex compact subsets of $F$ if $t=\kappa$ and
on the precompact (totally bounded) subsets of $F$ if $t=\gamma$.

The so-called $\varepsilon$-product of Schwartz is defined by 
\begin{equation}\label{notation0}
F\varepsilon E:=L_{e}(F_{\kappa}',E)
\end{equation}
where $L(F_{\kappa}',E)$ is equipped with the topology of uniform convergence on equicontinuous subsets of $F'$. 
This definition of the $\varepsilon$-product coincides with the original 
one by Schwartz \cite[Chap.\ I, \S1, D\'{e}finition, p.\ 18]{Sch1}. 
It is symmetric which means that $F\varepsilon E\cong E\varepsilon F$. In the literature the definition of the 
$\varepsilon$-product is sometimes done the other way around, i.e.\ $E\varepsilon F$ is defined by the right-hand side 
of \eqref{notation0} but due to the symmetry these definitions are equivalent and for our purpose the given definition 
is more suitable. If we replace $F_{\kappa}'$ by $F_{\gamma}'$, we obtain Grothendieck's definition of the 
$\varepsilon$-product and we remark that the two $\varepsilon$-products coincide 
if $F$ is quasi-complete because then $F_{\gamma}'=F_{\kappa}'$. Jarchow uses a third, different definition 
of the $\varepsilon$-product (see \cite[16.1, p.\ 344]{Jarchow}) which coincides with the one of Schwartz 
if $F$ is complete by \cite[9.3.7 Proposition, p.\ 179]{Jarchow}. However, we stick to Schwartz' definition.

For locally convex Hausdorff spaces $F_{i}$, $E_{i}$ and $T_{i}\in L(F_{i},E_{i})$, $i=1,2$, we define the $\varepsilon$-product 
$T_{1}\varepsilon T_{2}\in L(F_{1}\varepsilon F_{2},E_{1}\varepsilon E_{2})$ of the operators $T_{1}$ and $T_{2}$ by 
\[
 (T_{1}\varepsilon T_{2})(u):=T_{2}\circ u\circ T_{1}^{t},\quad u\in F_{1}\varepsilon F_{2},
\]
where $T_{1}^{t}\colon E_{1}'\to F_{1}'$, $e'\mapsto e'\circ T_{1}$, is the dual map of $T_{1}$. 
If $T_{1}$ is an isomorphism and $F_{2}=E_{2}$, then $T_{1}\varepsilon \id_{E_{2}}$ 
is also an isomorphism with inverse $T_{1}^{-1}\varepsilon\id_{E_{2}}$ by \cite[Chap.\ I, \S1, Proposition 1, p.\ 20]{Sch1} 
(or \cite[16.2.1 Proposition, p.\ 347]{Jarchow} if the $F_{i}$ are complete).

As usual we consider the tensor product $F\otimes E$ as an algebraic subspace of $F\varepsilon E$ for two locally convex Hausdorff 
spaces $F$ and $E$ by means of the linear injection 
\[
\Theta\colon F\otimes E\to F\varepsilon E,\; 
\sum^{k}_{n=1}{f_{n}\otimes e_{n}}\longmapsto\bigl[y\mapsto \sum^{k}_{n=1}{y(f_{n}) e_{n}}\bigr].
\]
Via $\Theta$ the space $F\otimes E$ is identified with space of operators with finite rank in $F\varepsilon E$ 
and a locally convex topology is induced on $F\otimes E$. 
We write $F\otimes_{\varepsilon} E$ for $F\otimes E$ equipped 
with this topology and $F\,\widehat{\otimes}_{\varepsilon} E$ for the completion 
of the injective tensor product $F\otimes_{\varepsilon}E$.
By $\mathfrak{F}(E)$ we denote the space of linear operators from $E$ to $E$ with finite rank. 
A locally convex Hausdorff space $E$ is said to have (Schwartz') approximation property (AP) 
if the identity $\id_{E}$ on $E$ is contained in the closure of $\mathfrak{F}(E)$ in $L_{\kappa}(E,E)$. 
The space $E$ has AP if and only if $E\otimes F$ is dense in $E\varepsilon F$ 
for every locally convex Hausdorff space (every Banach space) $F$ by \cite[Satz 10.17, p.\ 250]{Kaballo}.
The space $E$ has Grothendieck's approximation property if $\id_{E}$ is contained 
in the closure of $\mathfrak{F}(E)$ in $L_{\gamma}(E,E)$. If $E$ is quasi-complete, 
both approximation properties coincide. 
For more information on the theory of $\varepsilon$-products 
and tensor products see \cite{Defant}, \cite{Jarchow} and \cite{Kaballo}. 

A function $f\colon\Omega\to E$ on an open set $\Omega\subset\R^{d}$ to an lcHs $E$ is called 
continuously partially differentiable ($f$ is $\mathcal{C}^{1}$) 
if for the $n$-th unit vector $e_{n}\in\R^{d}$ the limit
\[
(\partial^{e_{n}})^{E}f(x):=\lim_{\substack{h\to 0\\ h\in\R,h\neq 0}}\frac{f(x+he_{n})-f(x)}{h}
\]
exists in $E$ for every $x\in\Omega$ and $(\partial^{e_{n}})^{E}f$ 
is continuous on $\Omega$ ($(\partial^{e_{n}})^{E}f$ is $\mathcal{C}^{0}$) for every $1\leq n\leq d$.
For $k\in\N$ a function $f$ is said to be $k$-times continuously partially differentiable 
($f$ is $\mathcal{C}^{k}$) if $f$ is $\mathcal{C}^{1}$ and all its first partial derivatives are $\mathcal{C}^{k-1}$.
A function $f$ is called infinitely continuously partially differentiable ($f$ is $\mathcal{C}^{\infty}$) 
if $f$ is $\mathcal{C}^{k}$ for every $k\in\N$.
For $k\in\N_{\infty}:=\N\cup\{\infty\}$ the functions $f\colon\Omega\to E$ which are $\mathcal{C}^{k}$ form a linear space which
is denoted by $\mathcal{C}^{k}(\Omega,E)$. 
For $\beta\in\N_{0}^{d}$ with $|\beta|:=\sum_{n=1}^{d}\beta_{n}\leq k$ and a function $f\colon\Omega\to E$ 
on an open set $\Omega\subset\R^{d}$ to an lcHs $E$ we set $(\partial^{\beta_{n}})^{E}f:=f$ if $\beta_{n}=0$, and
\[
(\partial^{\beta_{n}})^{E}f(x)
:=\underbrace{(\partial^{e_{n}})^{E}\cdots(\partial^{e_{n}})^{E}}_{\beta_{n}\text{-times}}f(x)
\]
if $\beta_{n}\neq 0$ and the right-hand side exists in $E$ for every $x\in\Omega$.
Further, we define 
\[
(\partial^{\beta})^{E}f(x)
=:\bigl((\partial^{\beta_{1}})^{E}\cdots(\partial^{\beta_{d}})^{E}\bigr)f(x)
\]
if the right-hand side exists in $E$ for every $x\in\Omega$ and set $f^{(\beta)}:=(\partial^{\beta})^{E}f$ if $d=1$.
\section{Schauder decomposition}
Let us start with our main theorem on Schauder decompositions of $\varepsilon$-products.

\begin{thm}\label{thm:schauder_decomp}
Let $F$ and $E$ be lcHs, $(f_{n})_{n\in\N}$ an equicontinuous Schauder basis of $F$ 
with associated coefficient functionals $(\lambda_{n})_{n\in\N}$ and set $Q_{n}\colon F\to F$, 
$Q_{n}(f):=\lambda_{n}(f)f_{n}$ for every $n\in\N$. Then the following holds.
\begin{enumerate}
\item [a)] The sequence $(P_{k})_{k\in\N}$ given by $P_{k}:=\bigl(\sum_{n=1}^{k}Q_{n}\bigr)\varepsilon\id_{E}$ is 
a Schauder decomposition of $F\varepsilon E$.
\item [b)] Each $u\in F\varepsilon E$ has the series representation
\[
u(f')=\sum_{n=1}^{\infty}u(\lambda_{n})f'(f_{n}),\quad f'\in F'.
\]
\item [c)] $F\otimes E$ is sequentially dense in $F\varepsilon E$. 
\end{enumerate}
\end{thm}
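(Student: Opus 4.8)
The plan is to reduce everything to the scalar expansion operators $P_k^{\K}:=\sum_{n=1}^{k}Q_n$, which by hypothesis form an equicontinuous Schauder decomposition of $F$: they are continuous projections, equicontinuous in $L(F,F)$, and satisfy $P_k^{\K}f\to f$ for every $f\in F$. First I would unravel the operator $\varepsilon$-product. For $u\in F\varepsilon E=L_e(F_\kappa',E)$ and $f'\in F'$ one has $P_k(u)=u\circ(P_k^{\K})^{t}$, and since $(P_k^{\K})^{t}f'=f'\circ P_k^{\K}=\sum_{n=1}^{k}f'(f_n)\lambda_n$ in $F'$, linearity of $u$ yields
\[
P_k(u)(f')=\sum_{n=1}^{k}u(\lambda_n)f'(f_n).
\]
The crucial observation is that $P_k(u)=\Theta\bigl(\sum_{n=1}^{k}f_n\otimes u(\lambda_n)\bigr)$, so each $P_k(u)$ is a finite-rank element of $F\otimes E\subseteq F\varepsilon E$; this already ties part c) to the convergence in a).

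For a) I would record the composition law $(T_1\varepsilon T_2)\circ(S_1\varepsilon S_2)=(T_1S_1)\varepsilon(T_2S_2)$, immediate from the definition together with $(T_1S_1)^{t}=S_1^{t}T_1^{t}$. The $P_k$ are continuous projections (being $\varepsilon$-products of continuous maps, with idempotency a special case of what follows). Property (i) then comes from $P_kP_j=(P_k^{\K}P_j^{\K})\varepsilon\id_E=P_{\min(j,k)}^{\K}\varepsilon\id_E=P_{\min(j,k)}$, using property (i) for the scalar decomposition. For (ii) I would choose $e\in E\setminus\{0\}$ and $f\in F$ with $P_k^{\K}f\neq P_j^{\K}f$ and test on the rank-one operator $u=\Theta(f\otimes e)$: a short computation gives $P_k(u)=\Theta(P_k^{\K}f\otimes e)$, so injectivity of $\Theta$ and $e\neq 0$ force $P_k(u)\neq P_j(u)$.

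The heart of the matter, and the main obstacle, is property (iii): $P_k(u)\to u$ in $L_e(F_\kappa',E)$, i.e.\ uniformly on each equicontinuous $H\subseteq F'$. Fix such an $H$ and a seminorm $p_\alpha$ on $E$. Since $u\colon F_\kappa'\to E$ is continuous, there are an absolutely convex compact $K\subseteq F$ and $C>0$ with $p_\alpha(u(g'))\leq C\sup_{f\in K}|g'(f)|$ for all $g'\in F'$; applying this to $g'=(P_k^{\K})^{t}f'-f'$ gives
\[
p_\alpha\bigl(P_k(u)(f')-u(f')\bigr)\leq C\sup_{f\in K}|f'(P_k^{\K}f-f)|.
\]
Equicontinuity of $H$ furnishes a continuous seminorm $q$ on $F$ with $|f'(g)|\leq q(g)$ for all $f'\in H$ and $g\in F$, so the right-hand side is bounded, uniformly in $f'\in H$, by $C\sup_{f\in K}q(P_k^{\K}f-f)$. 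It therefore suffices to show $P_k^{\K}\to\id_F$ uniformly on the compact set $K$. This is the standard fact that an equicontinuous sequence of operators converging pointwise converges uniformly on precompact sets: cover $K$ by finitely many $q$-balls, bound the two ``corner'' terms by equicontinuity and the finitely many ``centre'' terms by pointwise convergence. Granting this, (iii) follows, and this uniform-on-compacta step is where I expect the real work to lie.

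Finally I would harvest b) and c) from (iii). Evaluating $P_k(u)\to u$ on the equicontinuous singleton $\{f'\}$ gives $u(f')=\sum_{n=1}^{\infty}u(\lambda_n)f'(f_n)$, which is b). For c), since $P_k(u)\in F\otimes E$ by the first paragraph and $P_k(u)\to u$ by (iii), every $u\in F\varepsilon E$ is the limit of a sequence in $F\otimes E$, proving sequential density.
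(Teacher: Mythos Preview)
Your proof is correct and follows essentially the same route as the paper: the same continuity-of-$u$ estimate reducing (iii) to uniform convergence of $P_k^{\K}$ on absolutely convex compact sets, the same finite-rank identification for b) and c), and the same idea for (ii) (the paper's $u_{k,x}$ is just your $\Theta(f_k\otimes x)$). The only cosmetic difference is that for (i) you invoke the composition law $(T_1\varepsilon T_2)\circ(S_1\varepsilon S_2)=(T_1S_1)\varepsilon(T_2S_2)$ directly, whereas the paper verifies $P_kP_j=P_{\min(j,k)}$ by an explicit computation of $Q_n^{t}\circ Q_m^{t}$; and for the uniform-on-compacta step the paper cites \cite[Theorem 8.5.1 (b)]{Jarchow} where you sketch the standard $\varepsilon/3$-argument.
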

\begin{proof}
Since $(f_{n})$ is a Schauder basis of $F$, the sequence $(\sum_{n=1}^{k}Q_{n})$ converges weakly to $\id_{F}$. 
Thus we deduce from the equicontinuity of $(f_{n})$ that $(\sum_{n=1}^{k}Q_{n})$ converges to $\id_{F}$ in $L_{\kappa}(F,F)$ 
by \cite[Theorem 8.5.1 (b), p.\ 156]{Jarchow}. For $f'\in F'$ and $f\in F$ holds
\begin{align*}
  (Q_{n}^{t}\circ Q_{m}^{t})(f')(f)
&=Q_{m}^{t}(f')(Q_{n}(f))=Q_{m}^{t}(f')(\lambda_{n}(f)f_{n})=f'(\lambda_{m}(\lambda_{n}(f)f_{n})f_{m})\\
&=\lambda_{m}(f_{n})\lambda_{n}(f)f'(f_{m})
 =\begin{cases}
   \lambda_{n}(f)f'(f_{n})&,\; m=n,\\
   0&,\; m\neq n,
  \end{cases}
\end{align*}
due to the uniqueness of the coefficient functionals $(\lambda_{n})$ (see \cite[14.2.1 Proposition, p.\ 292]{Jarchow}) and
it follows for $k,j\in\N$ that 
\[
(\sum_{n=1}^{j}Q_{n}^{t}\circ\sum_{m=1}^{k}Q_{n}^{t})(f')(f)=\sum_{n=1}^{\min(j,k)}\lambda_{n}(f)f'(f_{n})
=\sum_{n=1}^{\min(j,k)}Q_{n}^{t}(f')(f).
\]
This implies that
\[
(P_{k}P_{j})(u)=u\circ\sum_{n=1}^{j}Q_{n}^{t}\circ\sum_{m=1}^{k}Q_{n}^{t}
=u\circ\sum_{n=1}^{\min(j,k)}Q_{n}^{t}=P_{\min(j,k)}(u)
\]
for all $u\in F\varepsilon E$. If $k\neq j$, w.l.o.g.\ $k>j$, we choose $x\in E$, $x\neq 0$, and 
define $u_{k,x}\colon F'\to E$ by $u_{k,x}(f'):=f'(f_{k})x$. Then $u_{k,x}\in F\varepsilon E$ and 
\[
(P_{k}-P_{j})(u_{k,x})=\sum_{n=j+1}^{k}u_{k,x}\circ Q_{n}^{t}=u_{k,x}\neq 0
\]
since 
\[
(u_{k,x}\circ Q_{n}^{t})(f')=u_{k,x}(\lambda_{n}(\cdot)f'(f_{n}))=\lambda_{n}(f_{k})f'(f_{n})x
 =\begin{cases}
   f'(f_{k})x&,\; n=k,\\
   0&,\; n\neq k.
  \end{cases}
\]
It remains to prove that for each $u\in F\varepsilon E$ 
\[
\lim_{k\to\infty}P_{k}(u)=u
\]
in $F\varepsilon E$. Let $(q_{\beta})_{\beta\in\mathfrak{B}}$ denote the system of seminorms inducing 
the locally convex topology of $F$. Let $u\in F\varepsilon E$ and $\alpha\in\mathfrak{A}$. 
Due to the continuity of $u$ there are an absolutely convex compact set $K=K(u,\alpha)\subset F$ 
and $C_{0}=C_{0}(u,\alpha)>0$ such that for each $f'\in F'$ we have 
\begin{align*}
 p_{\alpha}\bigl((P_{k}(u)-u)(f')\bigr)
&=p_{\alpha}\bigl(u\bigl((\sum_{n=1}^{k}Q_{n}^{t}-\id_{F'})(f')\bigr)\bigr)
\leq C_{0}\sup_{f\in K}\bigl|(\sum_{n=1}^{k}Q_{n}^{t}-\id_{F'})(f')(f)\bigr|\\
&=C_{0}\sup_{f\in K}\bigl|f'(\sum_{n=1}^{k}Q_{n}f-f)\bigr|.
\end{align*}
Let $V$ be an absolutely convex $0$-neighbourhood in $F$. As a consequence of the equicontinuity of 
the polar $V^{\circ}$ there are $C_{1}>0$ and $\beta\in\mathfrak{B}$ such that 
\[ 
\sup_{f'\in V^{\circ}}p_{\alpha}\bigl((P_{k}(u)-u)(f')\bigr)
\leq C_{0}C_{1}\sup_{f\in K}q_{\beta}(\sum_{n=1}^{k}Q_{n}f-f).
\]
In combination with the convergence of $(\sum_{n=1}^{k}Q_{n})$ to $\id_{F}$ in $L_{\kappa}(F,F)$ 
this yields the convergence of $(P_{k}(u))$ to $u$ in $F\varepsilon E$ and settles part a). 

Let us turn to b) and c). Since 
\[
P_{k}(u)(f')=u\bigl(\sum_{n=1}^{k}Q_{n}^{t}(f')\bigr)=\sum_{n=1}^{k}u(\lambda_{n})f'(f_{n})
\]
for every $f'\in F'$, we note that the range of $P_{k}(u)$ is contained in $\Span(u(\lambda_{n})\;|\;1\leq n\leq k)$ 
for each $u\in F\varepsilon E$ and $k\in\N$. Hence $P_{k}(u)$ has finite rank and thus belongs to $F\otimes E$ 
implying the sequential density of $F\otimes E$ in $F\varepsilon E$ and the desired series representation by part a).
\end{proof}

\begin{rem}\label{rem:schauder_decomp}
If $F$ and $E$ are complete, we have under the assumption of \prettyref{thm:schauder_decomp} that 
$F\,\widehat{\otimes}_{\varepsilon}E\cong F\varepsilon E$ by c) since $F\varepsilon E$ is complete 
by \cite[Satz 10.3, p.\ 234]{Kaballo} and $F\,\widehat{\otimes}_{\varepsilon}E$ is the closure 
of $F\otimes E$ in $F\varepsilon E$. 
Thus each element of $F\,\widehat{\otimes}_{\varepsilon}E$ has a series representation. 
\end{rem}

Let us apply the preceding theorem to spaces of Lebesgue integrable functions. 
We consider the measure space $([0,1], \mathscr{L}([0,1]), \lambda)$ of Lebesgue measurable sets 
and use the notation $\mathcal{L}^{p}[0,1]$ for 
the space of (equivalence classes) of Lebesgue $p$-integrable functions on $[0,1]$.
The Haar system $h_{n}\colon [0,1]\to\R$, $n\in\N$, given by 
$h_{1}(x):=1$ for all $x\in [0,1]$ and 
\[
h_{2^{k}+j}(x):=
\begin{cases}
\phantom{-}1 &, (2j-2)/2^{k+1}\leq x<(2j-1)/2^{k+1},\\
-1 &, (2j-1)/2^{k+1}\leq x<2j/2^{k+1},\\
\phantom{-}0 &, \text{else},
\end{cases}
\]
for $k\in\N_{0}$ and $1\leq j\leq 2^{k}$ forms a Schauder basis 
of $\mathcal{L}^{p}[0,1]$ for every $1\leq p<\infty$ and the associated coefficient 
functionals are 
\[
\lambda_{n}(f):=\int_{[0,1]}f(x)h_{n}(x)\d\lambda(x), \quad f\in \mathcal{L}^{p}[0,1],\;n\in\N,
\]
(see \cite[Satz I, p.\ 317]{Schauder1928}). Because $\mathcal{L}^{p}[0,1]$ is Banach space and thus barrelled, 
its Schauder basis $(h_{n})$ is equicontinuous and 
we directly obtain from \prettyref{thm:schauder_decomp} the following corollary. 

\begin{cor}\label{cor:L_p_>1}
Let $E$ be an lcHs and $1\leq p<\infty$. Then $(\sum_{n=1}^{k}\lambda_{n}(\cdot)h_{n}\varepsilon\id_{E})_{k\in\N}$ is 
a Schauder decomposition of $\mathcal{L}^{p}[0,1]\varepsilon E$ and for
each $u\in\mathcal{L}^{p}[0,1]\varepsilon E$ holds
\[
u(f')=\sum_{n=1}^{\infty}u(\lambda_{n})f'(f_{n}),\quad f'\in \mathcal{L}^{p}[0,1]'.
\]
\end{cor}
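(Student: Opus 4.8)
The plan is to read this off directly from \prettyref{thm:schauder_decomp} with $F=\mathcal{L}^{p}[0,1]$ and the Haar system $(h_{n})$ in the role of the equicontinuous Schauder basis. Setting $Q_{n}(f):=\lambda_{n}(f)h_{n}$, the expansion operators are $\sum_{n=1}^{k}Q_{n}=\sum_{n=1}^{k}\lambda_{n}(\cdot)h_{n}$, so that $P_{k}=\bigl(\sum_{n=1}^{k}Q_{n}\bigr)\varepsilon\id_{E}$ is exactly the operator appearing in the statement. Once the hypotheses of the theorem are checked, part a) yields the Schauder decomposition of $\mathcal{L}^{p}[0,1]\varepsilon E$ and part b) yields the series representation, so the entire task reduces to verifying that $(h_{n})$ meets the standing assumptions.

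First I would confirm that $(h_{n})$ is a Schauder basis in the precise sense required. By \cite[Satz I, p.\ 317]{Schauder1928} the Haar system is a topological basis of $\mathcal{L}^{p}[0,1]$ for every $1\leq p<\infty$, with the coefficient functionals $\lambda_{n}(f)=\int_{[0,1]}f(x)h_{n}(x)\,\d\lambda(x)$. Since $\mathcal{L}^{p}[0,1]$ is a Banach space, hence a Fr\'{e}chet space, these functionals are automatically continuous by \cite[Corollary 28.11, p.\ 351]{meisevogt1997}, so $(h_{n})$ is indeed a Schauder basis.

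The only point requiring a moment's thought is equicontinuity, which is precisely the hypothesis \prettyref{thm:schauder_decomp} demands. Here I would invoke that $\mathcal{L}^{p}[0,1]$, being a Banach space, is barrelled, and recall from the introduction that a Schauder basis of a barrelled space is automatically equicontinuous via the uniform boundedness principle. With this the hypotheses are complete, and the conclusion follows by a direct application of \prettyref{thm:schauder_decomp}: part a) gives that $(\sum_{n=1}^{k}\lambda_{n}(\cdot)h_{n}\varepsilon\id_{E})_{k}$ is a Schauder decomposition of $\mathcal{L}^{p}[0,1]\varepsilon E$, and part b) gives $u(f')=\sum_{n=1}^{\infty}u(\lambda_{n})f'(h_{n})$ for each $u$ and each $f'\in\mathcal{L}^{p}[0,1]'$. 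I therefore expect no genuine obstacle: the mathematical weight sits entirely in Schauder's classical theorem and in the already-proven \prettyref{thm:schauder_decomp}, while the corollary itself is pure bookkeeping, the equicontinuity check being the single non-automatic step.
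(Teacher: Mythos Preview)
Your proposal is correct and matches the paper's own argument essentially verbatim: the paper simply notes that $\mathcal{L}^{p}[0,1]$ is a Banach space, hence barrelled, so the Haar Schauder basis is equicontinuous, and then invokes \prettyref{thm:schauder_decomp} directly. The only difference is that you spell out the continuity of the coefficient functionals via \cite[Corollary 28.11]{meisevogt1997}, which the paper leaves implicit.
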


Defining $\mathcal{L}^{p}([0,1],E):=\mathcal{L}^{p}[0,1]\varepsilon E$, we can read the corollary above as a statement 
on series representations in the vector-valued version of $\mathcal{L}^{p}[0,1]$. 
However, in many cases of spaces $\mathcal{F}(\Omega)$ of scalar-valued functions there 
is a more natural way to define the vector-valued version $\mathcal{F}(\Omega,E)$ of $\mathcal{F}(\Omega)$, 
see for example the space of holomorphic functions from the introduction.
If $\mathcal{F}(\Omega)\varepsilon E$ and $\mathcal{F}(\Omega,E)$ are isomorphic 
via the map $S$ from the introduction, we can translate \prettyref{thm:schauder_decomp} 
to the more natural setting $\mathcal{F}(\Omega,E)$ 
which motivates the following definition.

\begin{defn}[$\varepsilon$-compatible]\label{def:compatible}
Let $\Omega$ be a set and $E$ an lcHs. If $\mathcal{F}(\Omega)\subset \K^{\Omega}$ and 
$\mathcal{F}(\Omega,E)\subset E^{\Omega}$ are lcHs such 
that $\delta_{x}\in\mathcal{F}(\Omega)'$ for all $x\in\Omega$ and
\[
S\colon \mathcal{F}(\Omega)\varepsilon E\to \mathcal{F}(\Omega,E),\; u\longmapsto [x\mapsto u(\delta_{x})],
\]  
is an isomorphism, then we say that $\mathcal{F}(\Omega)$ and $\mathcal{F}(\Omega,E)$ are $\varepsilon$-compatible.
\end{defn}

If we want to emphasise the dependence of $S$ on $\mathcal{F}(\Omega)$, we write $S_{\mathcal{F}(\Omega)}$. 
Several sufficient conditions for $S$ being an isomorphism are given in \cite[3.17 Theorem, p.\ 12]{kruse2017}. 

\begin{rem}\label{rem:identification_subspaces}
Let $\Omega$ be a set and $E$ an lcHs. If $\mathcal{F}(\Omega)\subset \K^{\Omega}$ and 
$\mathcal{F}(\Omega,E)\subset E^{\Omega}$ are lcHs such 
that $\delta_{x}\in\mathcal{F}(\Omega)'$ for all $x\in\Omega$ and
\[
S\colon \mathcal{F}(\Omega)\varepsilon E\to \mathcal{F}(\Omega,E),\; u\longmapsto [x\mapsto u(\delta_{x})],
\]  
is an isomorphism into, then we get by identification of isomorphic subspaces
 \[
  \mathcal{F}(\Omega)\otimes_{\varepsilon} E\subset \mathcal{F}(\Omega)\varepsilon E \subset \mathcal{F}(\Omega,E)
 \]
 and the embedding $ \mathcal{F}(\Omega)\otimes E\hookrightarrow \mathcal{F}(\Omega,E)$ is given by 
 $f\otimes e\mapsto [x\mapsto f(x)e]$.
\end{rem}
\begin{proof}
The inclusions obviously hold and $\mathcal{F}(\Omega)\varepsilon E$ and $\mathcal{F}(\Omega,E)$ 
induce the same topology on $\mathcal{F}(\Omega)\otimes E$. 
Further, we have
 \[
 f\otimes e \overset{\Theta}{\longmapsto}[y\mapsto y(f)e]
 \overset{S}{\longmapsto}[x\longmapsto [y\mapsto y(f) e](\delta_{x})]
 =[x\mapsto f(x)e].
 \]
\end{proof}

\begin{cor}\label{cor:schauder_decomp}
Let $\mathcal{F}(\Omega)$ and $\mathcal{F}(\Omega,E)$ be $\varepsilon$-compatible spaces, 
$(f_{n})_{n\in\N}$ an equi\-continuous Schauder basis of $\mathcal{F}(\Omega)$ 
with associated coefficient functionals $(\lambda_{n}^{\K})_{n\in\N}$ and let 
$\lambda_{n}^{E}\colon \mathcal{F}(\Omega,E)\to E$ such that 
\begin{equation}\label{eq:schauder_decomp}
\lambda_{n}^{E}(S(u))=u(\lambda_{n}^{\K}),\quad u\in\mathcal{F}(\Omega)\varepsilon E,
\end{equation}
for all $n\in\N$. Set $Q_{n}^{E}\colon \mathcal{F}(\Omega,E)\to \mathcal{F}(\Omega,E)$, 
$Q_{n}^{E}(f):=\lambda_{n}^{E}(f)f_{n}$ for every $n\in\N$. Then the following holds.
\begin{enumerate}
\item [a)] The sequence $(P_{k}^{E})_{k\in\N}$ given by $P_{k}^{E}:=\sum_{n=1}^{k}Q_{n}^{E}$ is 
a Schauder decomposition of $\mathcal{F}(\Omega,E)$.
\item [b)] Each $f\in \mathcal{F}(\Omega,E)$ has the series representation
\[
f=\sum_{n=1}^{\infty}\lambda_{n}^{E}(f)f_{n}.
\]
\item [c)] $\mathcal{F}(\Omega)\otimes E$ is sequentially dense in $\mathcal{F}(\Omega,E)$. 
\end{enumerate}
\end{cor}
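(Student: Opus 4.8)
The plan is to transport the Schauder decomposition $(P_k)$ of $\mathcal{F}(\Omega)\varepsilon E$ furnished by \prettyref{thm:schauder_decomp} a) through the isomorphism $S$, showing that $S$ intertwines the operators $P_k:=(\sum_{n=1}^{k}Q_n)\varepsilon\id_E$ with the operators $P_k^E$. The entire corollary hinges on the single identity
\[
P_k^E\circ S=S\circ P_k\qquad\text{on }\mathcal{F}(\Omega)\varepsilon E,
\]
equivalently $P_k^E=S\circ P_k\circ S^{-1}$. Once this is established, parts a), b) and c) follow formally, since conjugation by an isomorphism carries a Schauder decomposition to a Schauder decomposition and transports the series representation along.

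To verify the intertwining identity I would fix $u\in\mathcal{F}(\Omega)\varepsilon E$ and $x\in\Omega$ and compute $S(P_k(u))(x)=P_k(u)(\delta_x)$. By the definition of the operator $\varepsilon$-product one has $P_k(u)=\id_E\circ u\circ(\sum_{n=1}^{k}Q_n)^{t}=u\circ\sum_{n=1}^{k}Q_n^{t}$, so I must evaluate $Q_n^{t}(\delta_x)$. For $g\in\mathcal{F}(\Omega)$ one computes $Q_n^{t}(\delta_x)(g)=\delta_x(Q_n(g))=(\lambda_n^{\K}(g)f_n)(x)=f_n(x)\lambda_n^{\K}(g)$, whence $Q_n^{t}(\delta_x)=f_n(x)\lambda_n^{\K}$ in $\mathcal{F}(\Omega)'$. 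Linearity of $u$ then gives
\[
S(P_k(u))(x)=u\Bigl(\sum_{n=1}^{k}f_n(x)\lambda_n^{\K}\Bigr)=\sum_{n=1}^{k}f_n(x)\,u(\lambda_n^{\K}).
\]
At this point I invoke the compatibility hypothesis \eqref{eq:schauder_decomp}, which replaces $u(\lambda_n^{\K})$ by $\lambda_n^{E}(S(u))$, so that $S(P_k(u))(x)=\sum_{n=1}^{k}\lambda_n^{E}(S(u))f_n(x)=P_k^{E}(S(u))(x)$. As a by-product this also confirms that $P_k^{E}$ genuinely maps into $\mathcal{F}(\Omega,E)$, its image coinciding with that of $S\circ P_k\circ S^{-1}$, and, via the identification in \prettyref{rem:identification_subspaces}, that each value $P_k^{E}(f)=\sum_{n=1}^{k}\lambda_n^{E}(f)f_n$ lies in the finite-rank part $\mathcal{F}(\Omega)\otimes E$.

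With the intertwining identity in hand I would conclude as follows. For a), since $(P_k)$ is a Schauder decomposition of $\mathcal{F}(\Omega)\varepsilon E$ and $S$ is an isomorphism, the conjugates $P_k^{E}=S\circ P_k\circ S^{-1}$ satisfy (i) because $P_k^{E}P_j^{E}=S P_kP_j S^{-1}=S P_{\min(j,k)}S^{-1}=P_{\min(j,k)}^{E}$, satisfy (ii) because $P_k^{E}=P_j^{E}$ would force $P_k=P_j$, and satisfy (iii) because $P_k^{E}(f)=S(P_k(S^{-1}f))\to S(S^{-1}f)=f$ by continuity of $S$. For b), property (iii) yields $P_k^{E}(f)\to f$ for every $f\in\mathcal{F}(\Omega,E)$, and since $P_k^{E}(f)=\sum_{n=1}^{k}\lambda_n^{E}(f)f_n$ the series representation follows at once. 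For c), \prettyref{thm:schauder_decomp} c) gives $P_k(S^{-1}f)\in\mathcal{F}(\Omega)\otimes E$, and $S$ maps this finite-rank part onto itself by \prettyref{rem:identification_subspaces}, so $P_k^{E}(f)\in\mathcal{F}(\Omega)\otimes E$ converges to $f$, which is sequential density. The only real work is the bookkeeping in the second paragraph — correctly unwinding the operator $\varepsilon$-product and the dual map $Q_n^{t}$ and invoking \eqref{eq:schauder_decomp} at the right moment; there is no genuine analytic obstacle, as all convergence is imported from \prettyref{thm:schauder_decomp} through $S$.
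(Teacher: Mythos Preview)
Your proof is correct and follows essentially the same approach as the paper: both establish the intertwining identity $S\circ P_k=P_k^{E}\circ S$ by evaluating at an arbitrary point $x\in\Omega$ and unwinding $Q_n^{t}(\delta_x)=f_n(x)\lambda_n^{\K}$, then invoke \eqref{eq:schauder_decomp} and transport the conclusions of \prettyref{thm:schauder_decomp} through the isomorphism $S$. Your write-up is in fact more explicit than the paper's about why conjugation by an isomorphism preserves the three axioms of a Schauder decomposition, but the underlying argument is identical.
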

\begin{proof}
For each $u\in\mathcal{F}(\Omega)\varepsilon E$ and $x\in\Omega$ we note that
 \begin{align*}
    (S\circ P_{k})(u)(x)
  &=S\bigl(\sum_{n=1}^{k}Q_{n}^{t}(\delta_{x})\bigr) 
   =u\bigl(\sum_{n=1}^{k}\lambda_{n}^{\K}(\cdot)f_{n}(x)\bigr) 
   =\sum_{n=1}^{k}u(\lambda_{n}^{\K})f_{n}(x)\\
  &=\sum_{n=1}^{k}\lambda_{n}^{E}(S(u))f_{n}(x)
   =(P_{k}^{E}\circ S)(u)(x)
 \end{align*}
which means that $S\circ P_{k}=P_{k}^{E}\circ S$. This implies part a) and b) 
by \prettyref{thm:schauder_decomp} a) since $S$ is an isomorphism. 
Part c) is a direct consequence of \prettyref{thm:schauder_decomp} c) and 
the isomorphy $\mathcal{F}(\Omega)\varepsilon E\cong\mathcal{F}(\Omega,E)$.
\end{proof}

In the preceding corollary we used the isomorphism $S$ to obtain a Schauder decomposition. 
On the other hand, if $S$ is an isomorphism into which is often the case (see \cite[3.9 Theorem, p.\ 9]{kruse2017}), 
we can use a Schauder decomposition of $\mathcal{F}(\Omega,E)$ to 
prove the surjectivity of $S$.

\begin{prop}\label{prop:reverse_Schauder}
Let $\Omega$ be a set and $E$ an lcHs. Let $\mathcal{F}(\Omega)\subset \K^{\Omega}$ and 
$\mathcal{F}(\Omega,E)\subset E^{\Omega}$ be lcHs such 
that $\delta_{x}\in\mathcal{F}(\Omega)'$ for all $x\in\Omega$ and
\[
S\colon \mathcal{F}(\Omega)\varepsilon E\to \mathcal{F}(\Omega,E),\; u\longmapsto [x\mapsto u(\delta_{x})],
\]  
is an isomorphism into.
Let there be $(f_{n})_{n\in\N}$ in $\mathcal{F}(\Omega)$ and 
for every $f\in \mathcal{F}(\Omega,E)$ a sequence $(\lambda_{n}^{E}(f))_{n\in\N}$ in $E$ such that
 \[
  f=\sum_{n=1}^{\infty}\lambda_{n}^{E}(f)f_{n}, \quad f\in \mathcal{F}(\Omega,E).
 \]
Then the following holds.
\begin{enumerate}
\item [a)] $\mathcal{F}(\Omega)\otimes E$ is sequentially dense in $\mathcal{F}(\Omega,E)$.
\item [b)] If $\mathcal{F}(\Omega)$ and $E$ are sequentially complete, then
 \[
  \mathcal{F}(\Omega,E)\cong\mathcal{F}(\Omega)\varepsilon E.
 \] 
\item [c)] If $\mathcal{F}(\Omega)$ and $E$ are complete, then
 \[
  \mathcal{F}(\Omega,E)\cong\mathcal{F}(\Omega)\varepsilon E\cong\mathcal{F}(\Omega)\widehat{\otimes}_{\varepsilon}E.
 \]
\end{enumerate}
\end{prop}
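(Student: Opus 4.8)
The plan is to establish a) directly from the given expansion and then to bootstrap b) and c) from it via a sequential-closedness argument. For a), the hypothesis $f=\sum_{n=1}^{\infty}\lambda_{n}^{E}(f)f_{n}$ means precisely that the partial sums $s_{k}:=\sum_{n=1}^{k}\lambda_{n}^{E}(f)f_{n}$ converge to $f$ in $\mathcal{F}(\Omega,E)$. I would observe that the $n$-th summand $\lambda_{n}^{E}(f)f_{n}$ is nothing but the image of the elementary tensor $f_{n}\otimes\lambda_{n}^{E}(f)$ under the embedding $\mathcal{F}(\Omega)\otimes E\hookrightarrow\mathcal{F}(\Omega,E)$, $g\otimes e\mapsto[x\mapsto g(x)e]$, recorded in \prettyref{rem:identification_subspaces}. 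Hence every $s_{k}$ lies in $\mathcal{F}(\Omega)\otimes E$, and $s_{k}\to f$ is exactly the asserted sequential density.

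For b), since $S$ is an isomorphism into, I would use \prettyref{rem:identification_subspaces} to regard $\mathcal{F}(\Omega)\otimes E\subset\mathcal{F}(\Omega)\varepsilon E\subset\mathcal{F}(\Omega,E)$, where the middle space is identified with its image $S(\mathcal{F}(\Omega)\varepsilon E)$ carrying the transported topology; the only thing still to be shown for $\mathcal{F}(\Omega,E)\cong\mathcal{F}(\Omega)\varepsilon E$ is the surjectivity of $S$. Fix $f\in\mathcal{F}(\Omega,E)$ with partial sums $s_{k}\in\mathcal{F}(\Omega)\otimes E$ as above, and set $\tilde{s}_{k}:=S^{-1}(s_{k})=\sum_{n=1}^{k}f_{n}\otimes\lambda_{n}^{E}(f)\in\mathcal{F}(\Omega)\varepsilon E$. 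Because $S$ is a topological embedding, the convergent and hence Cauchy sequence $(s_{k})$ in the subspace $S(\mathcal{F}(\Omega)\varepsilon E)$ pulls back to a Cauchy sequence $(\tilde{s}_{k})$ in $\mathcal{F}(\Omega)\varepsilon E$. If $\mathcal{F}(\Omega)\varepsilon E$ is sequentially complete, then $\tilde{s}_{k}\to u$ for some $u\in\mathcal{F}(\Omega)\varepsilon E$, and continuity of $S$ gives $S(u)=\lim_{k}S(\tilde{s}_{k})=\lim_{k}s_{k}=f$. Thus $S$ is onto, and being an isomorphism into, an isomorphism.

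The main obstacle is precisely the sequential completeness of $\mathcal{F}(\Omega)\varepsilon E=L_{e}(\mathcal{F}(\Omega)_{\kappa}',E)$, which I would derive from the sequential completeness of $\mathcal{F}(\Omega)$ and $E$ as the sequential analogue of the completeness theorem already invoked in \prettyref{rem:schauder_decomp} (cf.\ \cite[Satz 10.3, p.\ 234]{Kaballo}). The easy half is the existence of a pointwise limit: for each $y\in\mathcal{F}(\Omega)'$ the singleton $\{y\}$ is equicontinuous, so a Cauchy sequence in $L_{e}$ is Cauchy at $y$ and converges in the sequentially complete space $E$, defining a linear limit $u$. The delicate point, and the place where sequential completeness of $\mathcal{F}(\Omega)$ is genuinely used, is the continuity of this limit as a map $\mathcal{F}(\Omega)_{\kappa}'\to E$; here I would rely on a Grothendieck-type completeness argument transferring the uniform convergence on equicontinuous sets into membership in $L_{e}(\mathcal{F}(\Omega)_{\kappa}',E)$.

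For c), completeness implies sequential completeness, so b) already yields $\mathcal{F}(\Omega,E)\cong\mathcal{F}(\Omega)\varepsilon E$. For the remaining isomorphism I would argue exactly as in \prettyref{rem:schauder_decomp}: $\mathcal{F}(\Omega)\varepsilon E$ is complete by \cite[Satz 10.3, p.\ 234]{Kaballo}, and $\mathcal{F}(\Omega)\,\widehat{\otimes}_{\varepsilon}E$ is by definition the closure of $\mathcal{F}(\Omega)\otimes_{\varepsilon}E$ inside it; part a) shows that $\mathcal{F}(\Omega)\otimes E$ is dense, so this closure is all of $\mathcal{F}(\Omega)\varepsilon E$, giving $\mathcal{F}(\Omega,E)\cong\mathcal{F}(\Omega)\varepsilon E\cong\mathcal{F}(\Omega)\,\widehat{\otimes}_{\varepsilon}E$.
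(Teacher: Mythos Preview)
Your proposal is correct and follows essentially the same route as the paper: for a) you identify the partial sums with elements of $\mathcal{F}(\Omega)\otimes E$ via \prettyref{rem:identification_subspaces}; for b) you pull back the Cauchy sequence through the embedding $S$ and invoke the sequential completeness of $\mathcal{F}(\Omega)\varepsilon E$ from \cite[Satz 10.3, p.\ 234]{Kaballo}; and for c) you combine b) with the density from a) inside the complete space $\mathcal{F}(\Omega)\varepsilon E$. The only cosmetic difference is that the paper cites \cite[Satz 10.3]{Kaballo} directly for the sequential completeness of the $\varepsilon$-product rather than sketching the Grothendieck-type argument you outline, and for c) it observes more directly that the limit $u=\lim_{k}\Theta(\sum_{n=1}^{k}f_{n}\otimes\lambda_{n}^{E}(f))$ itself lies in the closure of $\mathcal{F}(\Omega)\otimes E$ inside $\mathcal{F}(\Omega)\varepsilon E$.
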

\begin{proof}
Let $f\in \mathcal{F}(\Omega,E)$ and observe that
\[
P^{E}_{k}(f):=\sum_{n=1}^{k}\lambda_{n}^{E}(f)f_{n}=\sum_{n=1}^{k}f_{n}\otimes \lambda_{n}^{E}(f)\in \mathcal{F}(\Omega)\otimes E
\]
for every $k\in\N$ by \prettyref{rem:identification_subspaces}.
Due to our assumption we have the convergence $P^{E}_{k}(f)\to f$ in $\mathcal{F}(\Omega,E)$. 
Thus $\mathcal{F}(\Omega)\otimes E$ is sequentially dense in 
$\mathcal{F}(\Omega,E)$. 

Let us turn to part b). If $\mathcal{F}(\Omega)$ and $E$ are sequentially complete, 
then $\mathcal{F}(\Omega)\varepsilon E$ is sequentially complete 
by \cite[Satz 10.3, p.\ 234]{Kaballo}. Since $S$ is an isomorphism into and
\[
S(\Theta(\sum_{n=q}^{k}f_{n}\otimes \lambda_{n}^{E}(f)))=\sum_{n=q}^{k}\lambda_{n}^{E}(f)f_{n}
\]
for all $k,q\in\N$, $k>q$, we get that $(\Theta(\sum_{n=1}^{k}f_{n}\otimes \lambda_{n}^{E}(f))$ is a Cauchy sequence 
in $\mathcal{F}(\Omega)\varepsilon E$ and thus convergent. Hence we deduce that
\[
S(\lim_{k\to\infty}\Theta(\sum_{n=1}^{k}f_{n}\otimes \lambda_{n}^{E}(f)))
=\lim_{k\to\infty}\sum_{n=1}^{k}(S\circ\Theta)(f_{n}\otimes \lambda_{n}^{E}(f))
=\sum_{n=1}^{\infty}\lambda_{n}^{E}(f)f_{n}=f
\]
which proves the surjectivity of $S$. 

If $\mathcal{F}(\Omega)$ and $E$ are complete, then $\mathcal{F}(\Omega)\widehat{\otimes}_{\varepsilon}E$ 
is the closure of $\mathcal{F}(\Omega)\otimes_{\varepsilon}E$ in the complete space 
$\mathcal{F}(\Omega)\varepsilon E$ by \cite[Satz 10.3, p.\ 234]{Kaballo}.
As $\lim_{k\to\infty}\Theta(\sum_{n=1}^{k}f_{n}\otimes \lambda_{n}^{E}(f))$ is an element of the closure, 
we obtain part c). 
\end{proof}
\section{Applications}
\section*{Sequence spaces}
For our first application we recall the definition of some sequence spaces.
A matrix $A:=\left(a_{k,j}\right)_{k,j\in\N}$ of non-negative numbers is called K\"othe matrix if 
it fulfils:
\begin{enumerate}
	\item [(1)] $\forall\;k\in\N\;\exists\;j\in\N:\;a_{k,j}>0$,
	\item [(2)] $\forall\;k,j\in\N:\;a_{k,j}\leq a_{k,j+1}$.
\end{enumerate}
For an lcHs $E$ we define the K\"othe space
\[
\lambda^{\infty}(A,E):=\{x=(x_{k})\in E^{\N}\;|\;\forall\;j\in\N,\,\alpha\in \mathfrak{A}:\; 
|x|_{j,\alpha}:=\sup_{k\in\N}p_{\alpha}(x_{k})a_{k,j}<\infty\}
\]
and the topological subspace
\[
 c_{0}(A,E):=\{x=(x_{k})\in E^{\N}\;|\;\forall\;j\in\N:\;\lim_{k\to\infty}x_{k}a_{k,j}=0\}.
\]
In particular, the space $c_{0}(\N,E)$ of null-sequences in $E$ is obtained as 
$c_{0}(\N,E)=c_{0}(A,E)$ with $a_{k,j}:=1$ for all $k,j\in\N$. 
The space of convergent sequences in $E$ is defined by
\[
c(\N,E):=\{x\in E^{\N}\;|\; x=(x_{k})\;\text{converges in}\;E\}
\]
and equipped with the system of seminorms
\[
|x|_{\alpha}:=\sup_{k\in\N}p_{\alpha}(x_{k}),\quad x\in c(\N,E),
\]
for $\alpha\in\mathfrak{A}$. We define the spaces of $E$-valued rapidely decreasing sequences 
which we need for our subsection on Fourier expansion by 
\[
s(\Omega,E):=\{x=(x_{k})\in E^{\Omega}\;|\;\forall\;j\in\N,\,\alpha\in\mathfrak{A}:\; 
|x|_{j,\alpha}:=\sup_{k\in\Omega}p_{\alpha}(x_{k})(1+|k|^{2})^{j/2}<\infty\}
\]
with $\Omega=\N^{d}$, $\N_{0}^{d}$, $\Z^{d}$. 
Furthermore, we equip the space $E^{\N}$ with the system of seminorms given by 
\[
 \|x\|_{l,\alpha}:=\sup_{k\in\N}p_{\alpha}(x_{k})\chi_{\{1,\ldots,l\}}(k),\quad x=(x_{k})\in E^{\N},
\]
for $l\in\N$ and $\alpha\in\mathfrak{A}$. 
For a non-empty set $\Omega$ we define for $n\in\Omega$ the $n$-th unit function by 
\[
 \varphi_{n,\Omega}\colon \Omega\to \K,\;
 \varphi_{n,\Omega}(k):=\begin{cases} 1 &,\;k=n, \\ 0 &,\;\text{else},\end{cases}
\]
and we simply write $\varphi_{n}$ instead of $\varphi_{n,\Omega}$ if no confusion seems to be likely. Further, we set
$
 \varphi_{\infty}\colon \N\to \K,\;
 \varphi_{\infty}(k):=1,
$
and $x_{\infty}:=\delta_{\infty}(x):=\lim_{k\to\infty}x_{k}$ for $x\in c(\N,E)$.  
For series representations of the elements in these sequence spaces we do not need \prettyref{cor:schauder_decomp} 
due to the subsequent proposition but we can use the representation to obtain the surjectivity of $S$ for sequentially complete $E$.

\begin{prop}\label{prop:vector_valued_seq_spaces}
 Let $E$ be an lcHs and $\ell\mathcal{V}(\Omega,E)$ one of the spaces $c_{0}(A,E)$, $E^{\N}$, 
 $s(\N^{d},E)$, $s(\N_{0}^{d},E)$ or $s(\Z^{d},E)$. 
 \begin{enumerate}
 \item [a)] Then $(\sum_{n\in\Omega,|n|\leq k}\delta_{n} \varphi_{n})_{k\in\N}$ is a Schauder decomposition 
 of $\ell\mathcal{V}(\Omega,E)$ and 
  \[
  x=\sum_{n\in\Omega} x_{n}\varphi_{n},\quad x\in \ell\mathcal{V}(\Omega,E).
  \]
 \item [b)] Then $(\delta_{\infty}\varphi_{\infty}+\sum_{n=1}^{k}(\delta_{n}-\delta_{\infty}) \varphi_{n})_{k\in\N}$ 
 is a Schauder decomposition of $c(\N,E)$ and 
 \[
  x=x_{\infty}\varphi_{\infty} +\sum_{n=1}^{\infty}(x_{n}-x_{\infty})\varphi_{n},\quad x\in c(\N,E).
 \]
 \end{enumerate}
\end{prop}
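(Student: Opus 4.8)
The plan is to bypass the $\varepsilon$-product machinery of \prettyref{cor:schauder_decomp} and verify the three defining properties of a Schauder decomposition directly from an explicit coordinate description of the operators. First I would record how each operator acts coordinatewise. Writing $P_k^{E}:=\sum_{n\in\Omega,\,|n|\leq k}\delta_{n}\varphi_{n}$, one reads off $(P_k^{E}x)_{m}=x_{m}$ for $|m|\leq k$ and $(P_k^{E}x)_{m}=0$ otherwise, so $P_k^{E}$ is nothing but the truncation to the ball $\{|n|\leq k\}$. In part b) the operator $R_{k}:=\delta_{\infty}\varphi_{\infty}+\sum_{n=1}^{k}(\delta_{n}-\delta_{\infty})\varphi_{n}$ satisfies $(R_{k}x)_{m}=x_{m}$ for $m\leq k$ and $(R_{k}x)_{m}=x_{\infty}$ for $m>k$; that is, $R_{k}$ leaves the first $k$ entries untouched and overwrites the tail by the limit $x_{\infty}$. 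A preliminary point is that these are continuous linear operators, which reduces to the continuity of $\delta_{n}$ (from $a_{n,j}>0$ for some $j$ in the K\"othe case, and analogously for the remaining spaces) and of $\delta_{\infty}$, the latter because $p_{\alpha}(\delta_{\infty}(x))=\lim_{k}p_{\alpha}(x_{k})\leq|x|_{\alpha}$.

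For part a) the algebraic axioms are immediate. Since the $P_k^{E}$ are coordinate truncations, $P_{k}^{E}P_{j}^{E}=P_{\min(j,k)}^{E}$ holds exactly, and $P_{k}^{E}\neq P_{j}^{E}$ for $k\neq j$ is seen by applying both operators to a sequence that is zero except for a single nonzero value $e\in E$ at an index $m$ whose norm lies strictly between the two radii. The substantive step is the convergence $P_{k}^{E}x\to x$, which amounts to showing that the tail seminorm of $x$ vanishes, and here the three families must be treated separately. For $E^{\N}$ convergence is eventually exact: $\|P_{k}^{E}x-x\|_{l,\alpha}=0$ as soon as $k\geq l$. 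For $c_{0}(A,E)$ one has $|P_{k}^{E}x-x|_{j,\alpha}=\sup_{m>k}p_{\alpha}(x_{m})a_{m,j}$, which tends to $0$ directly from the defining condition $\lim_{m}x_{m}a_{m,j}=0$. The only genuinely quantitative estimate is for $s(\Omega,E)$: for $|m|>k$ I would write
\[
p_{\alpha}(x_{m})(1+|m|^{2})^{j/2}
=p_{\alpha}(x_{m})(1+|m|^{2})^{(j+1)/2}(1+|m|^{2})^{-1/2}
\leq|x|_{j+1,\alpha}\,(1+k^{2})^{-1/2},
\]
so that $|P_{k}^{E}x-x|_{j,\alpha}\leq|x|_{j+1,\alpha}(1+k^{2})^{-1/2}\to 0$. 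As this holds for every $j$ and $\alpha$, we obtain $\lim_{k}P_{k}^{E}x=x$ and hence the series representation $x=\sum_{n\in\Omega}x_{n}\varphi_{n}$.

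Part b) follows the same template with the modified operators. Distinctness is checked as before. For the projection identity the key observation is that $R_{j}x$ is again a convergent sequence with the \emph{same} limit, $(R_{j}x)_{\infty}=x_{\infty}$; substituting this into $R_{k}$ and splitting into the cases $k\leq j$ and $k>j$ yields $R_{k}R_{j}=R_{\min(j,k)}$. Convergence is the cleanest case: $|R_{k}x-x|_{\alpha}=\sup_{m>k}p_{\alpha}(x_{m}-x_{\infty})$ tends to $0$ precisely because $x$ converges to $x_{\infty}$ in $E$, giving $x=x_{\infty}\varphi_{\infty}+\sum_{n=1}^{\infty}(x_{n}-x_{\infty})\varphi_{n}$.

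The main obstacle is purely technical and confined to the $s(\Omega,E)$ case: one must insert the extra weight $(1+|m|^{2})^{(j+1)/2}$ to manufacture the decaying factor $(1+k^{2})^{-1/2}$, since a bound using only the $j$-th seminorm does not force the tail to vanish. Every other ingredient --- the coordinate descriptions, the projection identities, and the distinctness --- falls out immediately from the explicit form of the operators.
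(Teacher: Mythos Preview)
Your argument is correct and follows essentially the same direct verification as the paper: explicit coordinate description of the truncation (resp.\ tail-replacement) operators, immediate check of the projection identities and distinctness, and a case-by-case tail estimate for convergence. The only cosmetic difference is in the $s(\Omega,E)$ estimate, where the paper multiplies and divides by $(1+|m|^{2})^{j/2}$ to pass to $|x|_{2j,\alpha}$ whereas you pass to $|x|_{j+1,\alpha}$ via $(1+|m|^{2})^{1/2}$; both choices give the required decay.
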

\begin{proof}
Let us begin with a). For $x=(x_{n})\in \ell\mathcal{V}(\Omega,E)$ let $(P_{k}^{E})$ be the sequence 
in $\ell\mathcal{V}(\Omega,E)$ given by $P_{k}^{E}(x):=\sum_{|n|\leq k} x_{n}\varphi_{n}$. 
It is easy to see that $P_{k}^{E}$ is a continuous projection on $\ell\mathcal{V}(\Omega,E)$, 
$P_{k}^{E}P_{j}^{E}=P_{\min(k,j)}^{E}$ for all $k,j\in\N$ and $P_{k}^{E}\neq P_{j}^{E}$ for $k\neq j$.
Let $\varepsilon>0$, $\alpha\in\mathfrak{A}$ and $j\in\N$. 
For $x\in c_{0}(A,E)$ there is $N_{0}\in\N$ such that 
$p_{\alpha}(x_{n}a_{n,j})<\varepsilon$ for all $n\geq N_{0}$. 
Hence we have for $x\in c_{0}(A,E)$
\[
|x-P_{k}^{E}(x)|_{j,\alpha}=\sup_{n>k}p_{\alpha}(x_{n})a_{n,j}\leq\sup_{n\geq N_{0}}p_{\alpha}(x_{n})a_{n,j}\leq\varepsilon
\]
for all $k\geq N_{0}$. For $x\in E^{\N}$ and $l\in\N$ we have 
\[
\left\|x-P_{k}^{E}(x)\right\|_{l,\alpha}=0<\varepsilon
\]
for all $k\geq l$. For $x\in s(\Omega,E)$, $\Omega=\N^{d}$, $\N_{0}^{d}$, $\Z^{d}$, 
we notice that there is $N_{1}\in\N$ such that for all $n\in\Omega$ with $|n|\geq N_{1}$ we have
\[
\frac{(1+|n|^{2})^{j/2}}{(1+|n|^{2})^{j}}=(1+|n|^{2})^{-j/2}<\varepsilon.
\]
Thus we deduce for $|n|\geq N_{1}$
\[
p_{\alpha}(x_{n})(1+|n|^{2})^{j/2}< \varepsilon p_{\alpha}(x_{n})(1+|n|^{2})^{j}
\leq \varepsilon |x|_{2j,\alpha}
\]
and hence
\[
|x-P_{k}^{E}(x)|_{j,\alpha}=\sup_{|n|>k}p_{\alpha}(x_{n})a_{n,j}\leq\sup_{|n|\geq N_{1}}p_{\alpha}(x_{n})(1+|n|^{2})^{j/2}
\leq\varepsilon |x|_{2j,\alpha}
\]
for all $k\geq N_{1}$. Therefore $(P_{k}^{E}(x))$ converges to $x$ in $\ell\mathcal{V}(\Omega,E)$ and 
\[
x=\lim_{k\to\infty}P_{k}^{E}(x)=\sum_{n\in\Omega}x_{n}\varphi_{n}.
\]

Now, we turn to b). For $x=(x_{n})\in c(\N,E)$ let $(\widetilde{P}_{k}^{E}(x))$ be the sequence in $c(\N,E)$ given by 
$\widetilde{P}_{k}^{E}(x):=x_{\infty}\varphi_{\infty}+\sum_{n=1}^{k} (x_{n}-x_{\infty}) \varphi_{n}$. 
Again, it is easy to see that $\widetilde{P}_{k}^{E}$ is a continuous projection on $c(\N,E)$, 
$\widetilde{P}_{k}^{E}\widetilde{P}_{j}^{E}=\widetilde{P}_{\min(k,j)}^{E}$ for all $k,j\in\N$ 
and $\widetilde{P}_{k}^{E}\neq\widetilde{P}_{j}^{E}$ for $k\neq j$.
Let $\varepsilon>0$ and $\alpha\in\mathfrak{A}$. Then there is $N_{2}\in\N$ such that 
$p_{\alpha}(x_{n}-x_{\infty})<\varepsilon$ for all $n\geq N_{2}$. Thus we obtain 
\[
|x-\widetilde{P}_{k}^{E}(x)|_{\alpha}=\sup_{n>k}p_{\alpha}(x_{n}-x_{\infty})\leq \sup_{n\geq N_{2}}p_{\alpha}(x_{n}-x_{\infty})
\leq\varepsilon
\]
for all $k\geq N_{2}$ implying that $(\widetilde{P}_{k}^{E}(x))$ converges to $x$ in $c(\N,E)$ and 
\[
x=\lim_{k\to\infty}\widetilde{P}_{k}^{E}(x)=x_{\infty}\varphi_{\infty}+\sum_{n=1}^{\infty}(x_{n}-x_{\infty})\varphi_{n}.
\]
\end{proof}

\begin{thm}\label{thm:sequence.spaces}
 Let $E$ be a sequentially complete lcHs and $\ell\mathcal{V}(\Omega,E)$ one of the spaces $c_{0}(A,E)$, $E^{\N}$, 
 $s(\N^{d},E)$, $s(\N_{0}^{d},E)$ or $s(\Z^{d},E)$. Then
 \[
  (i)\;\ell\mathcal{V}(\Omega,E)\cong \ell\mathcal{V}(\Omega)\varepsilon E,\quad (ii)\;c(\N,E)\cong c(\N)\varepsilon E.
 \]
\end{thm}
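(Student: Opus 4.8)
The plan is to recognise that both isomorphisms are special cases of \prettyref{prop:reverse_Schauder} b), so that all the genuine analytic content has already been supplied by \prettyref{prop:vector_valued_seq_spaces}. That proposition provides, for each of the listed spaces, a sequence $(f_{n})$ in the scalar space together with a convergent series representation of every element of the vector-valued space: for the spaces in $(i)$ we take $f_{n}:=\varphi_{n}$ and $\lambda_{n}^{E}(x):=x_{n}=\delta_{n}(x)$, whereas for $c(\N,E)$ we reindex the basis of \prettyref{prop:vector_valued_seq_spaces} b) as $\varphi_{\infty},\varphi_{1},\varphi_{2},\ldots$ with coefficients $x_{\infty},\,x_{1}-x_{\infty},\,x_{2}-x_{\infty},\ldots$. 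The completeness hypotheses of \prettyref{prop:reverse_Schauder} b) hold because $E$ is sequentially complete by assumption and because the scalar spaces $c_{0}(A)$, $\K^{\N}$ and $s(\N^{d}),s(\N_{0}^{d}),s(\Z^{d})$ are Fr\'{e}chet spaces while $c(\N)$ is a Banach space, so all of them are in particular sequentially complete. Thus the theorem reduces to verifying the one remaining hypothesis of \prettyref{prop:reverse_Schauder}, namely that $S$ is an isomorphism into.

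For this I would first note that $\delta_{n}\in\ell\mathcal{V}(\Omega)'$ (resp.\ $\delta_{n}\in c(\N)'$) is immediate from the defining seminorms, since bounding a single coordinate is dominated by the corresponding sup-seminorm. For the injectivity of $S$ I would exploit that the unit vectors form an equicontinuous Schauder basis with coefficient functionals $\delta_{n}$ (resp.\ $\delta_{\infty}$ and $\delta_{n}-\delta_{\infty}$ in the case of $c$): as in the proof of \prettyref{thm:schauder_decomp} the expansion operators $\sum_{n=1}^{k}Q_{n}$ converge to $\id_{F}$ in $L_{\kappa}(F,F)$, so their transposes satisfy $\sum_{n=1}^{k}Q_{n}^{t}(y')\to y'$ in $F_{\kappa}'$ for every $y'$. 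If $u$ is continuous with $u(\delta_{n})=0$ for all $n$, then applying $u$ to this convergence forces $u=0$; for $c(\N)$ one additionally uses that $\delta_{n}\to\delta_{\infty}$ in $c(\N)_{\kappa}'$ (the equi-convergence characterisation of the compact subsets of $c(\N)$) to also annihilate the limit functional $\delta_{\infty}$.

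The delicate point, and the one I expect to be the main obstacle, is that $S$ is a \emph{topological} embedding, i.e.\ that $S^{-1}$ is continuous on the range. Continuity of $S$ itself is routine: a seminorm such as $|S(u)|_{j,\alpha}=\sup_{n}p_{\alpha}(u(a_{n,j}\delta_{n}))$ is controlled by the $\varepsilon$-product seminorm built from the equicontinuous family $\{a_{n,j}\delta_{n}\mid n\in\N\}$, which lies in the polar of the unit ball of $|\cdot|_{j}$. The reverse estimate, matching the $\varepsilon$-product seminorms (uniform convergence over \emph{all} equicontinuous subsets of $F'$) against the explicit sup-seminorms of the sequence spaces, is precisely the content for which one invokes \cite[3.9 Theorem, p.\ 9]{kruse2017}, whose hypotheses reduce here to the continuity of the point evaluations together with the fact that every equicontinuous subset of the dual is contained in the polar of a $0$-neighbourhood generated by the defining seminorms. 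Granting that $S$ is an isomorphism into, \prettyref{prop:reverse_Schauder} b) then yields $\ell\mathcal{V}(\Omega,E)\cong\ell\mathcal{V}(\Omega)\varepsilon E$ and $c(\N,E)\cong c(\N)\varepsilon E$, which are the assertions $(i)$ and $(ii)$.
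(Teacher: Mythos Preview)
Your proposal is correct and follows essentially the same route as the paper: reduce everything to \prettyref{prop:reverse_Schauder} b) via the series representations of \prettyref{prop:vector_valued_seq_spaces}, and invoke \cite[3.9 Theorem]{kruse2017} for the fact that $S$ is an isomorphism into. The paper's argument is just more terse: for the $\ell\mathcal{V}$ spaces it bundles the well-definedness of $S$ (i.e.\ that $S(u)$ actually lands in $\ell\mathcal{V}(\Omega,E)$) into an additional citation \cite[4.13 Proposition]{kruse2017} rather than sketching it, and for $c(\N)$ it deploys the convergence $\delta_{n}\to\delta_{\infty}$ in $c(\N)_{\gamma}'$ (via Banach--Steinhaus) primarily to verify that $S(u)=(u(\delta_{n}))_{n}$ is a convergent sequence in $E$---so that $S$ maps into $c(\N,E)$---rather than for injectivity as you phrase it; but of course the same limit relation serves both purposes.
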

\begin{proof} 
The map $S_{\ell\mathcal{V}(\Omega)}$ is an isomorphism into by \cite[3.9 Theorem, p.\ 9]{kruse2017} and 
\cite[4.13 Proposition, p.\ 23]{kruse2017}.
Considering $c(\N,E)$, we observe that for $x\in c(\N)$ 
\[
\delta_{n}(x)=x_{n}\to x_{\infty}=\delta_{\infty}(x)
\]
which implies the concergence $\delta_{n}\to \delta_{\infty}$ 
in $c(\N)_{\gamma}'$ by the Banach-Steinhaus theorem since $c(\N)$ is a Banach space. 
Hence we get 
\[
u(\delta_{\infty})=\lim_{n\to\infty}u(\delta_{n})
=\lim_{n\to\infty}S(u)(n)=\delta_{\infty}(S(u))
\]
for every $u\in c(\N)\varepsilon E$ which implies that $S_{c(\N)}$ is an isomorphism into 
by \cite[3.9 Theorem, p.\ 9]{kruse2017}.
From \prettyref{prop:vector_valued_seq_spaces} and \prettyref{prop:reverse_Schauder} we deduce our statement.
\end{proof}

\section*{Continuous and differentiable functions on a closed interval}

We start with continuous functions on compact sets. 
We recall the following definition from \cite[p.\ 259]{J.Voigt}. A locally convex Hausdorff space is said 
to have the metric convex compactness property (metric ccp) if 
the closure of the absolutely convex hull of every metrisable compact set is compact. 
In particular, every sequentially complete space has metric ccp and this implication is strict 
(see \cite[p.\ 3-4]{kruse2017} and the references therein).
Let $E$ be an lcHs, $\Omega\subset\R^{d}$ compact and denote by $\mathcal{C}(\Omega,E):=\mathcal{C}^{0}(\Omega,E)$ 
the space of continuous functions from $\Omega$ to $E$. We equip $\mathcal{C}(\Omega,E)$ with the system of seminorms given by
\[
|f|_{0,\alpha}:=\sup_{x\in\Omega}p_{\alpha}(f(x)),\quad f\in\mathcal{C}(\Omega,E),
\]
for $\alpha\in\mathfrak{A}$.
We want to apply our preceding results to intervals.
Let $-\infty<a<b<\infty$ and $T:=(t_{j})_{0\leq j\leq n}$ be a partition of 
the interval $[a,b]$, i.e.\ $a=t_{0}<t_{1}<\ldots<t_{n}=b$. 
The hat functions $h_{t_{j}}^{T}\colon [a,b]\to\R$ for the partition $T$ are given by 
\[
h_{t_{j}}^{T}(x):=
\begin{cases}
\frac{x-t_{j}}{t_{j}-t_{j-1}} &, t_{j-1}\leq x\leq t_{j},\\
\frac{t_{j+1}-x}{t_{j+1}-t_{j}} &,t_{j}< x\leq t_{j+1},\\
0 &, \text{else},
\end{cases}
\]
for $2\leq j\leq n-1$ and 
\[
h_{a}^{T}(x):=
\begin{cases}
\frac{t_{1}-x}{t_{1}-a} &, a\leq x\leq t_{1},\\
0 &, \text{else},
\end{cases}
\quad 
h_{b}^{T}(x):=
\begin{cases}
\frac{x-t_{n-1}}{b-t_{n-1}} &, t_{n-1}\leq x\leq b,\\
0 &, \text{else}.
\end{cases}
\]
Let $\mathcal{T}:=(t_{n})_{n\in\N_{0}}$ be a dense sequence in $[a,b]$ with $t_{0}=a$, $t_{1}=b$ 
and $t_{n}\neq t_{m}$ for $n\neq m$. For $T^{n}:=\{t_{0},\ldots,t_{n}\}$ there is a (unique) enumeration 
$\sigma\colon \{0,\ldots,n\}\to T^{n}$ such that $T_{n}:=(t_{\sigma(j)})_{0\leq j\leq n}$ 
is a partition of $[a,b]$. 
The functions $\varphi^{\mathcal{T}}_{0}:=h^{T_{1}}_{t_{0}}$, 
$\varphi^{\mathcal{T}}_{1}:=h^{T_{1}}_{t_{1}}$ and $\varphi^{\mathcal{T}}_{n}:=h^{T_{n}}_{t_{n}}$ 
for $n\geq 2$ are called Schauder hat functions for the sequence $\mathcal{T}$ and form a Schauder basis 
of $\mathcal{C}([a,b])$ with associated coefficient functionals given by 
$\lambda^{\K}_{0}(f):=f(t_{0})$, $\lambda^{\K}_{1}(f):=f(t_{1})$ and 
\[
\lambda^{\K}_{n+1}(f):=f(t_{n+1})
-\sum_{k=0}^{n}\lambda^{\K}_{k}(f)\varphi^{\mathcal{T}}_{k}(t_{n+1}),\quad f\in\mathcal{C}([a,b]),\; n\geq 1,
\]
by \cite[2.3.5 Proposition, p.\ 29]{Semadeni1982}. Looking at the coefficient functionals, we 
see that the right-hand sides even make sense if $f\in\mathcal{C}([a,b],E)$ and thus we define 
$\lambda^{E}_{n}$ on $\mathcal{C}([a,b],E)$ for $n\in\N_{0}$ accordingly. 

\begin{thm}\label{thm:cont.func.interval}
 Let $E$ be an lcHs with metric ccp and $\mathcal{T}:=(t_{n})_{n\in\N_{0}}$ a dense sequence in $[a,b]$ 
 with $t_{0}=a$, $t_{1}=b$ and $t_{n}\neq t_{m}$ for $n\neq m$. 
 Then $(\sum_{n=0}^{k}\lambda_{k}^{E}\varphi^{\mathcal{T}}_{n})_{k\in\N_{0}}$ is a Schauder decomposition of 
 $\mathcal{C}([a,b],E)$ and 
 \[
 f=\sum_{n=0}^{\infty}\lambda^{E}_{n}(f)\varphi^{\mathcal{T}}_{n},\quad f\in \mathcal{C}([a,b],E).
 \]
\end{thm}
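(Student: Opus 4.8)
The plan is to deduce the statement directly from \prettyref{cor:schauder_decomp}, applied with $\mathcal{F}(\Omega)=\mathcal{C}([a,b])$, $\mathcal{F}(\Omega,E)=\mathcal{C}([a,b],E)$, the Schauder hat functions $(\varphi^{\mathcal{T}}_{n})_{n\in\N_{0}}$ and their coefficient functionals $(\lambda^{\K}_{n})_{n\in\N_{0}}$. To invoke the corollary I must supply three ingredients: that $(\varphi^{\mathcal{T}}_{n})$ is an \emph{equicontinuous} Schauder basis of $\mathcal{C}([a,b])$, that $\mathcal{C}([a,b])$ and $\mathcal{C}([a,b],E)$ are $\varepsilon$-compatible, and that the $E$-valued functionals $\lambda^{E}_{n}$ defined by the same recursion satisfy the consistency condition \eqref{eq:schauder_decomp}.

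For the first ingredient, $(\varphi^{\mathcal{T}}_{n})$ is already known to be a Schauder basis of $\mathcal{C}([a,b])$ by the discussion preceding the theorem; since $\mathcal{C}([a,b])$ is a Banach space, hence barrelled, this basis is automatically equicontinuous by the uniform boundedness principle as recalled in the introduction. For the second ingredient, I would appeal to the isomorphism criterion \cite[3.17 Theorem, p.\ 12]{kruse2017}: this is exactly where the hypothesis that $E$ has metric ccp enters, guaranteeing that $S$ is onto, hence an isomorphism, so that $\mathcal{C}([a,b])$ and $\mathcal{C}([a,b],E)$ are $\varepsilon$-compatible.

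The computational heart is the third ingredient, which I would establish by induction on $n$. The key observation is that every $\lambda^{\K}_{n}$ is a finite linear combination of the point-evaluation functionals $\delta_{t_{j}}$, and $\lambda^{E}_{n}$ is obtained from the $E$-valued point evaluations by the identical recursion (so the defining right-hand sides indeed make sense on $\mathcal{C}([a,b],E)$). For $n=0,1$ one has $\lambda^{\K}_{n}=\delta_{t_{n}}$, whence $\lambda^{E}_{n}(S(u))=S(u)(t_{n})=u(\delta_{t_{n}})=u(\lambda^{\K}_{n})$. For the inductive step (with $n\geq 1$) I would compute, using linearity of $u$ and the induction hypothesis,
\[
\lambda^{E}_{n+1}(S(u))
=u(\delta_{t_{n+1}})-\sum_{k=0}^{n}u(\lambda^{\K}_{k})\varphi^{\mathcal{T}}_{k}(t_{n+1})
=u\Bigl(\delta_{t_{n+1}}-\sum_{k=0}^{n}\varphi^{\mathcal{T}}_{k}(t_{n+1})\lambda^{\K}_{k}\Bigr)
=u(\lambda^{\K}_{n+1}),
\]
which is precisely \eqref{eq:schauder_decomp}. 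With all three hypotheses verified, \prettyref{cor:schauder_decomp} delivers the Schauder decomposition and the series representation.

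I expect the main obstacle to be the $\varepsilon$-compatibility, i.e.\ the surjectivity of $S$: this is the only genuinely analytic input and is where the metric ccp assumption is indispensable, whereas the equicontinuity is formal and the consistency condition reduces to the clean induction above. Since the surjectivity is supplied by the external reference, the self-contained work is essentially the induction, where the one point to watch is that the finite linear combination $\delta_{t_{n+1}}-\sum_{k=0}^{n}\varphi^{\mathcal{T}}_{k}(t_{n+1})\lambda^{\K}_{k}$ really equals $\lambda^{\K}_{n+1}$ as an element of $\mathcal{C}([a,b])'$, which is immediate from the defining recursion.
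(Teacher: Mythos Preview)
Your proposal is correct and follows essentially the same route as the paper: verify equicontinuity via barrelledness of the Banach space $\mathcal{C}([a,b])$, establish $\varepsilon$-compatibility from the external reference (the paper cites \cite[5.4 Example, p.\ 25]{kruse2017} rather than the general \cite[3.17 Theorem]{kruse2017}, but this is only a difference in citation granularity), check \eqref{eq:schauder_decomp} by the same induction on $n$, and conclude via \prettyref{cor:schauder_decomp}. The inductive computation you wrote out is exactly the one in the paper.
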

\begin{proof}
The spaces $\mathcal{C}([a,b])$ and $\mathcal{C}([a,b],E)$ are $\varepsilon$-compatible by 
\cite[5.4 Example, p.\ 25]{kruse2017} if $E$ has metric ccp. 
$\mathcal{C}([a,b])$ is a Banach space and thus barrelled implying 
that its Schauder basis $(\varphi^{\mathcal{T}}_{n})$ is equicontinuous. 
We note that for all $u\in\mathcal{C}([a,b])\varepsilon E$ and $x\in [a,b]$
\[
\lambda_{n}^{E}(S(u))(x)=u(\delta_{t_{n}})=u(\lambda^{\K}_{n}),\quad n\in\{0,1\},
\] 
and by induction 
\begin{align*}
\lambda_{n+1}^{E}(S(u))(x)&=u(\delta_{t_{n+1}})-\sum_{k=0}^{n}\lambda^{E}_{k}(S(u))\varphi^{\mathcal{T}}_{k}(t_{n+1})
=u(\delta_{t_{n+1}})-\sum_{k=0}^{n}u(\lambda^{\K}_{k})\varphi^{\mathcal{T}}_{k}(t_{n+1})\\
&=u(\lambda_{n+1}^{\K}),\quad n\geq 1.
\end{align*}
Thus \eqref{eq:schauder_decomp} is fulfilled proving our claim by \prettyref{cor:schauder_decomp}.
\end{proof}

If $a=0$, $b=1$ and $\mathcal{T}$ is the sequence of dyadic numbers given in 
\cite[2.1.1 Definitions, p.\ 21]{Semadeni1982}, then $(\varphi^{\mathcal{T}}_{n})$ is the 
so-called Faber-Schauder system. Using the Schauder basis and coefficient functionals of 
the space $\mathcal{C}_{0}(\R)$ of continuous functions vanishing at infinity given 
in \cite[2.7.1, p.\ 41-42]{Semadeni1982} and \cite[2.7.4 Corollary, p.\ 43]{Semadeni1982} 
a corresponding, weaker result for the $E$-valued 
counterpart $\mathcal{C}_{0}(\R,E)$ holds as well by a similar argumentation. 
Another corresponding result holds for the space $C^{[\gamma]}_{0,0}([0,1],E)$, $0<\gamma<1$, of $\gamma$-H\"{o}lder 
continuous functions on $[0,1]$ with values in $E$ that vanish at zero and at infinity if one uses 
the Schauder basis and coefficient functionals of $C^{[\gamma]}_{0,0}([0,1])$
from \cite[Theorem 2, p.\ 220]{Ciesieski1960} and \cite[Theorem 3, p.\ 230]{Ciesieski1959}.
The results are a bit weaker in both cases since \cite[2.4 Theorem (2), p.\ 138-139]{B2} and 
\cite[5.9 Example b), p.\ 28]{kruse2017} only guarantee that $S_{\mathcal{C}_{0}(\R)}$ and
$S_{\mathcal{C}^{[\gamma]}_{0,0}([0,1])}$ are isomorphisms if $E$ is quasi-complete.

Now, we turn to to spaces of continuously differentiable functions on an interval $(a,b)$ such that
all derivatives can be continuously extended to the boundary.
For an lcHs $E$ and $k\in\N$ 
the space $\mathcal{C}^{k}([a,b],E)$ is given by 
\begin{align*}
 \mathcal{C}^{k}([a,b],E):=\{f\in\mathcal{C}^{k}((a,b),E)\;|\;&(\partial^{\beta})^{E}f\;
 \text{continuously extendable on}\;[a,b]\\
 &\text{for all}\;\beta\in\N_{0},\,\beta\leq k\}
\end{align*}
and equipped with the system of seminorms given by 
\[
 |f|_{\alpha}:=\sup_{\substack{x\in \Omega\\ \beta\in\N_{0}, \beta\leq k}}p_{\alpha}\bigl((\partial^{\beta})^{E}f(x)\bigr),
 \quad f\in\mathcal{C}^{k}([a,b],E),
\]
for $\alpha\in \mathfrak{A}$.
From the Schauder hat functions $(\varphi^{\mathcal{T}}_{n})$ for a dense sequence 
$\mathcal{T}:=(t_{n})_{n\in\N_{0}}$ in $[a,b]$ with $t_{0}=a$, $t_{1}=b$ and 
$t_{n}\neq t_{m}$ for $n\neq m$ and the associated coefficient functionals 
$\lambda_{n}^{\mathbb{K}}$ we can easily get 
a Schauder basis for the space $\mathcal{C}^{k}([a,b])$, $k\in\N$, by 
applying $\int_{a}^{(\cdot)}$ $k$-times to the series representation 
\[
f^{(k)}=\sum_{n=0}^{\infty}\lambda_{n}^{\K}(f^{(k)})\varphi^{\mathcal{T}}_{n},
\quad f\in\mathcal{C}^{k}([a,b]),
\]
where we identified $f^{(k)}$ with its continuous extension on $[a,b]$. 
The resulting Schauder basis $f_{n}^{\mathcal{T}}\colon [a,b]\to \R$ and associated coefficient 
functionals $\mu_{n}^{\K}\colon \mathcal{C}^{k}([a,b])\to \K$, $n\in\N_{0}$, are 
\begin{align*}
f_{n}^{\mathcal{T}}(x)&=\frac{1}{n!}(x-a)^{n}, &&\mu_{n}^{\K}(f)=f^{(n)}(a), & 0\leq n\leq k-1,\\
f_{n}^{\mathcal{T}}(x)&=\int_{a}^{x}\int_{a}^{t_{k-1}}\cdots\int_{a}^{t_{2}}\int_{a}^{t_{1}}
\varphi^{\mathcal{T}}_{n-k}\d t\d t_{1}\dots \d t_{k-1}, 
&&\mu_{n}^{\K}(f)=\lambda_{n-k}^{\K}(f^{(k)}), & n\geq k,
\end{align*}
for $x\in[a,b]$ and $f\in\mathcal{C}^{k}([a,b])$ (see e.g.\ \cite[p.\ 586-587]{schonefeld1969}, 
\cite[2.3.7, p.\ 29]{Semadeni1982}). 
Again, the mapping rule for the coefficient functionals still makes sense if $f\in\mathcal{C}^{k}([a,b],E)$ 
and so we define $\mu^{E}_{n}$ on $\mathcal{C}^{k}([a,b],E)$ for $n\in\N_{0}$ accordingly.

\begin{thm}\label{thm:cont.diff.func.interval}
 Let $E$ be an lcHs with metric ccp, $k\in\N$, $\mathcal{T}:=(t_{n})_{n\in\N_{0}}$ a dense sequence in $[a,b]$ 
 with $t_{0}=a$, $t_{1}=b$ and $t_{n}\neq t_{m}$ for $n\neq m$. 
 Then $(\sum_{n=0}^{l}\mu_{n}^{E}f^{\mathcal{T}}_{n})_{l\in\N_{0}}$ is a Schauder decomposition of 
 $\mathcal{C}^{k}([a,b],E)$ and 
 \[
 f=\sum_{n=0}^{\infty}\mu^{E}_{n}(f)f^{\mathcal{T}}_{n},\quad f\in \mathcal{C}^{k}([a,b],E).
 \]
\end{thm}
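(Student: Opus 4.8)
The plan is to apply \prettyref{cor:schauder_decomp} to the equicontinuous Schauder basis $(f_{n}^{\mathcal{T}})_{n\in\N_{0}}$ of $\mathcal{C}^{k}([a,b])$ with its coefficient functionals $(\mu_{n}^{\K})_{n\in\N_{0}}$. First I would record the two standing hypotheses of that corollary. The spaces $\mathcal{C}^{k}([a,b])$ and $\mathcal{C}^{k}([a,b],E)$ are $\varepsilon$-compatible by \cite{kruse2017} because $E$ has metric ccp, so $S:=S_{\mathcal{C}^{k}([a,b])}$ is an isomorphism; and $\mathcal{C}^{k}([a,b])$ is a Banach space, hence barrelled, so its Schauder basis $(f_{n}^{\mathcal{T}})$ is automatically equicontinuous. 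Thus everything reduces to verifying the consistency condition \eqref{eq:schauder_decomp}, namely $\mu_{n}^{E}(S(u))=u(\mu_{n}^{\K})$ for every $u\in\mathcal{C}^{k}([a,b])\varepsilon E$ and $n\in\N_{0}$.

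The central ingredient is that $S$ intertwines differentiation: for every $u\in\mathcal{C}^{k}([a,b])\varepsilon E$ and every $\beta\in\N_{0}$ with $\beta\leq k$ the function $S(u)$ lies in $\mathcal{C}^{k}([a,b],E)$ and
\[
(S(u))^{(\beta)}(x)=u\bigl(\delta_{x}^{(\beta)}\bigr),\quad x\in[a,b],
\]
where $\delta_{x}^{(\beta)}\in\mathcal{C}^{k}([a,b])'$ is the point-evaluation of the $\beta$-th derivative $g\mapsto g^{(\beta)}(x)$. This is precisely the property invoked for holomorphic functions in the introduction and is supplied by the $\varepsilon$-compatibility theory for differentiable functions in \cite{kruse2017}; the underlying reason is that $\delta_{x}^{(\beta)}=(D^{\beta})^{t}\delta_{x}$ for the $\beta$-th derivative operator $D^{\beta}$ and that differentiation commutes with the $\varepsilon$-product.

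With this in hand I would split the verification according to the two regimes in the definition of $\mu_{n}^{\K}$. For $0\leq n\leq k-1$ we have $\mu_{n}^{\K}=\delta_{a}^{(n)}$, so the intertwining relation gives at once
\[
\mu_{n}^{E}(S(u))=(S(u))^{(n)}(a)=u\bigl(\delta_{a}^{(n)}\bigr)=u(\mu_{n}^{\K}).
\]
For $n\geq k$ I would pass through the already settled case of continuous functions. Put $\tilde{u}:=(D^{k}\varepsilon\id_{E})(u)=u\circ(D^{k})^{t}\in\mathcal{C}([a,b])\varepsilon E$. Since $(D^{k})^{t}\delta_{x}=\delta_{x}^{(k)}$, the intertwining relation yields $(S(u))^{(k)}=S_{\mathcal{C}([a,b])}(\tilde{u})$. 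Applying the consistency relation established for continuous functions in the proof of \prettyref{thm:cont.func.interval} to $\tilde{u}$ then gives
\[
\mu_{n}^{E}(S(u))=\lambda_{n-k}^{E}\bigl(S_{\mathcal{C}([a,b])}(\tilde{u})\bigr)=\tilde{u}(\lambda_{n-k}^{\K})=u(\mu_{n}^{\K}),
\]
where the last equality uses $(D^{k})^{t}\lambda_{n-k}^{\K}=\lambda_{n-k}^{\K}\circ D^{k}=\mu_{n}^{\K}$. This confirms \eqref{eq:schauder_decomp} for all $n\in\N_{0}$, and \prettyref{cor:schauder_decomp} then delivers the Schauder decomposition and the series representation.

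The step I expect to be the main obstacle is the intertwining relation $(S(u))^{(\beta)}(x)=u(\delta_{x}^{(\beta)})$: one must justify that $S(u)$ really is $k$-times continuously differentiable as an $E$-valued function and that its derivatives are obtained by applying $u$ to the derivative-point-evaluations. Once this is secured through the $\varepsilon$-compatibility machinery of \cite{kruse2017}, the rest is bookkeeping that reduces the $n\geq k$ part to the continuous case via the transpose of $D^{k}$.
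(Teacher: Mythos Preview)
Your proposal is correct and follows essentially the same approach as the paper: verify the hypotheses of \prettyref{cor:schauder_decomp} by invoking $\varepsilon$-compatibility from \cite{kruse2017}, equicontinuity from barrelledness of the Banach space $\mathcal{C}^{k}([a,b])$, and the consistency condition \eqref{eq:schauder_decomp} via the intertwining relation $(\partial^{\beta})^{E}S(u)(x)=u(\delta_{x}\circ(\partial^{\beta})^{\K})$. The only cosmetic differences are that the paper handles the boundary points $a,b$ by an explicit limit argument (citing \cite[4.8 Proposition, 4.9 Lemma]{kruse2017}) and then deduces $\mu_{n}^{E}(S(u))=u(\mu_{n}^{\K})$ directly for all $n$, whereas you reduce the range $n\geq k$ to the continuous case via $D^{k}\varepsilon\id_{E}$ and \prettyref{thm:cont.func.interval}.
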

\begin{proof}
The spaces $\mathcal{C}^{k}([a,b])$ and $\mathcal{C}^{k}([a,b],E)$ are $\varepsilon$-compatible by 
\cite[5.11 Example, p.\ 29]{kruse2017} if $E$ has metric ccp. 
The Banach space $\mathcal{C}^{k}([a,b])$ is barrelled giving the equicontinuity of its Schauder basis. 
Due to \cite[4.12 Proposition, p.\ 22]{kruse2017} we have for all $u\in\mathcal{C}^{k}([a,b])\varepsilon E$, 
$\beta\in\N_{0}$, $\beta\leq k$, and $x\in(a,b)$
\[
(\partial^{\beta})^{E}S(u)(x)=u(\delta_{x}\circ(\partial^{\beta})^{\K}).
\]
Further, for every sequence $(x_{n})$ in $(a,b)$ converging to $t\in\{a,b\}$ we obtain by 
\cite[4.8 Proposition, p.\ 22]{kruse2017} in combination with \cite[4.9 Lemma, p.\ 22]{kruse2017} 
applied to $T:=(\partial^{\beta})^{\K}$
\[
\lim_{n\to\infty}(\partial^{\beta})^{E}S(u)(x_{n})=u(\lim_{n\to\infty}\delta_{x_{n}}\circ(\partial^{\beta})^{\K}).
\]
From these observations we deduce that $\mu_{n}^{E}(S(u))=u(\mu_{n}^{\K})$ for all $n\in\N_{0}$, i.e.\ 
\eqref{eq:schauder_decomp} holds. Therefore our statement is a consequence of \prettyref{cor:schauder_decomp}.
\end{proof}

\section*{Holomorphic functions}

In this short subsection we show how to get the result on power series expansion of holomorphic functions from the introduction. 
Let $E$ be an lcHs over $\C$, $z_{0}\in\C$, $r\in(0,\infty]$ and $\Omega:=\D_{r}(z_{0})$ 
where $\D_{r}(z_{0})\subset\C$ is an open disc around $z_{0}$ with radius $r>0$. We equip $\mathcal{O}(\Omega,E)$ 
with the system of seminorms given by
\[
 |f|_{K,\alpha}:=\sup_{z\in K}p_{\alpha}(f(z)),\quad f\in \mathcal{O}(\Omega,E),
\]
for $K\subset\Omega$ compact and $\alpha\in\mathfrak{A}$. We note that 
\[
\mathcal{O}(\Omega,E)=\{f\in\mathcal{C}^{\infty}(\Omega,E)\;|\;\overline{\partial}^{E}f=0\}
\]
if $E$ is locally complete where 
\[
\overline{\partial}^{E}f(z)
:=\frac{1}{2}((\partial^{e_{1}})^{E}+i(\partial^{e_{2}})^{E})f(z),
\quad f\in\mathcal{C}^{1}(\Omega,E),\; z\in\Omega,
\]
is the Cauchy-Riemann operator. Moreover, we set $(\partial^{0}_{\C})^{E}f:=f$, 
\[
 (\partial^{1}_{\C})^{E}f(z):=\lim_{\substack{h\to 0\\ h\in\C,h\neq 0}}\frac{f(z+h)-f(z)}{h},\quad z\in \Omega,
\]
and $(\partial^{n+1}_{\C})^{E}f:=(\partial^{1}_{\C})^{E}((\partial^{n}_{\C})^{E}f)$ for $n\in\N_{0}$ 
and $f\in \mathcal{O}(\Omega,E)$ (see \eqref{intro:holom}). We observe that
real and complex derivatives are related by
\begin{equation}\label{eq:complex.real.deriv}
 (\partial^{\beta})^{E}f(z)=i^{\beta_{2}}(\partial^{|\beta|}_{\C})^{E}f(z),\quad z\in\Omega,
\end{equation}
for every $f\in\mathcal{O}(\Omega,E)$ and $\beta=(\beta_{1},\beta_{2})\in\N_{0}^{2}$.

\begin{thm}\label{thm:powerseries}
  Let $E$ be a locally complete lcHs over $\C$, $z_{0}\in\C$ and $r\in(0,\infty]$. 
  Then $(\sum_{n=0}^{k}\frac{(\partial^{n}_{\C})^{E}f(z_{0})}{n!}(\cdot - z_{0})^{n})_{k\in\N_{0}}$ is a Schauder decomposition 
  of $\mathcal{O}(\D_{r}(z_{0}),E)$ and 
   \[
    f=\sum_{n=0}^{\infty}\frac{(\partial^{n}_{\C})^{E}f(z_{0})}{n!}(\cdot - z_{0})^{n},\quad f\in\mathcal{O}(\D_{r}(z_{0}),E).
   \]
\end{thm}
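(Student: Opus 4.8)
The plan is to realize this as a direct application of Corollary~\ref{cor:schauder_decomp}, exactly as in the proofs of Theorem~\ref{thm:cont.func.interval} and Theorem~\ref{thm:cont.diff.func.interval}. So I must verify three ingredients: that $\mathcal{O}(\D_{r}(z_{0}))$ and $\mathcal{O}(\D_{r}(z_{0}),E)$ are $\varepsilon$-compatible, that $\mathcal{O}(\D_{r}(z_{0}))$ carries an equicontinuous Schauder basis whose associated series representation is the classical power series, and finally that the consistency condition \eqref{eq:schauder_decomp} holds for the appropriate coefficient functionals. The scalar side is classical: on $\mathcal{O}(\D_{r}(z_{0}))$, equipped with the topology of uniform convergence on compact subsets, the monomials $f_{n}(z):=(z-z_{0})^{n}$ form a Schauder basis with coefficient functionals $\lambda_{n}^{\K}(f)=f^{(n)}(z_{0})/n!=(\partial^{n}_{\C})^{\K}f(z_{0})/n!$. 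Since $\mathcal{O}(\D_{r}(z_{0}))$ is a Fr\'{e}chet space, hence barrelled, this Schauder basis is automatically equicontinuous by the remarks in the introduction.

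First I would cite the $\varepsilon$-compatibility, presumably from \cite{kruse2017} as in the previous theorems, valid here because $E$ is locally complete (this is the standing hypothesis needed so that $S$ is surjective onto $E$-valued holomorphic functions). Next I must identify the $E$-valued coefficient functional $\lambda_{n}^{E}$ on $\mathcal{O}(\D_{r}(z_{0}),E)$ as $\lambda_{n}^{E}(f):=(\partial^{n}_{\C})^{E}f(z_{0})/n!$, which is well-defined since $E$-valued holomorphic functions are infinitely complex differentiable. The heart of the matter is then to check the consistency relation
\[
\lambda_{n}^{E}(S(u))=u(\lambda_{n}^{\K}),\quad u\in\mathcal{O}(\D_{r}(z_{0}))\varepsilon E,
\]
which amounts to showing that complex differentiation commutes with the map $S$, i.e.\ that $(\partial^{n}_{\C})^{E}S(u)(z_{0})=u(\delta_{z_{0}}\circ(\partial^{n}_{\C})^{\K})$ and that the latter equals $u$ applied to $n!\,\lambda_{n}^{\K}$.

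The main obstacle is precisely this commutation of $S$ with complex derivatives. I expect to handle it by the same mechanism already used in Theorem~\ref{thm:cont.diff.func.interval}: one shows that the real partial derivatives pass through $S$ via a result of the type \cite[4.12 Proposition, p.\ 22]{kruse2017}, giving $(\partial^{\beta})^{E}S(u)=u(\delta_{(\cdot)}\circ(\partial^{\beta})^{\K})$, and then one invokes the identity \eqref{eq:complex.real.deriv} relating real and complex derivatives to convert this into a statement about $(\partial^{n}_{\C})^{E}$. Concretely, choosing $\beta=(n,0)$ yields $(\partial^{n}_{\C})^{E}S(u)=(\partial^{\beta})^{E}S(u)=u(\delta_{(\cdot)}\circ(\partial^{n}_{\C})^{\K})$, and evaluating at $z_{0}$ together with $\lambda_{n}^{\K}=(\partial^{n}_{\C})^{\K}(\cdot)(z_{0})/n!=\delta_{z_{0}}\circ(\partial^{n}_{\C})^{\K}/n!$ gives the desired \eqref{eq:schauder_decomp}. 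With all hypotheses of Corollary~\ref{cor:schauder_decomp} verified, parts a) and b) of that corollary deliver both the Schauder decomposition $(\sum_{n=0}^{k}\lambda_{n}^{E}(\cdot)\,(\cdot-z_{0})^{n})_{k}$ and the claimed series representation, completing the proof.
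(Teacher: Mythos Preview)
Your proposal is correct and follows essentially the same approach as the paper's proof: verify $\varepsilon$-compatibility, note equicontinuity of the monomial basis via barrelledness of the Fr\'{e}chet space $\mathcal{O}(\D_{r}(z_{0}))$, and establish \eqref{eq:schauder_decomp} by combining \cite[4.12 Proposition, p.\ 22]{kruse2017} with the relation \eqref{eq:complex.real.deriv} to pass from real partial derivatives to complex ones, then invoke \prettyref{cor:schauder_decomp}. The only minor discrepancy is the citation for $\varepsilon$-compatibility: the paper invokes \cite[Theorem 9, p.\ 232]{B/F/J} rather than \cite{kruse2017}.
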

\begin{proof}
The spaces $\mathcal{O}(\D_{r}(z_{0}))$ and $\mathcal{O}(\D_{r}(z_{0}),E)$ are $\varepsilon$-compatible by \cite[Theorem 9, p.\ 232]{B/F/J} 
if $E$ is locally complete. Further, the Schauder basis $((\cdot - z_{0})^{n})$ of $\mathcal{O}(\D_{r}(z_{0}))$ is equicontinuous 
since the Fr\'{e}chet space $\mathcal{O}(\D_{r}(z_{0}))$ is barrelled. Due to \cite[4.12 Proposition, p.\ 22]{kruse2017} and 
\eqref{eq:complex.real.deriv} we have for all $u\in\mathcal{O}(\D_{r}(z_{0}))\varepsilon E$ 
and $z\in\D_{r}(z_{0})$
\[
(\partial^{n}_{\C})^{E}S(u)(z)=u(\delta_{z}\circ(\partial^{n}_{\C})^{\C})
\]
which means that \eqref{eq:schauder_decomp} is satisfied. \prettyref{cor:schauder_decomp} implies our statement.
\end{proof}

\section*{Fourier expansions}

In this subsection we turn our attention to Fourier expansions in the space of vector-valued 
rapidely decreasing functions and in the space of vector-valued smooth, $2\pi$-periodic functions. 
We start with the definition of the Pettis-integral 
which we need to define the Fourier coefficients for vector-valued functions.
For a measure space $(X, \Sigma, \mu)$ and $1\leq p<\infty$ let 
\[
\mathfrak{L}^{p}(X,\mu):=\{f\colon X\to\K\;\text{measurable}\;|\;
q_{p}(f):=\int_{X}|f(x)|^{p}\d\mu(x)<\infty\}
\]
and define the quotient space of $p$-integrable functions by 
$\mathcal{L}^{p}(X,\mu):=\mathfrak{L}^{p}(X,\mu)/\{f\in\mathfrak{L}^{p}(X,\mu)\;|\;q_{p}(f)=0\}$
which becomes a Banach space if it is equipped with the norm 
$\|f\|_{p}:=q_{p}(F)^{1/p}$, $f=[F]\in \mathcal{L}^{p}(X,\mu)$. 
From now on we do not distinguish between equivalence classes and 
their representants anymore.

For a measure space $(X, \Sigma, \mu)$ and $f\colon X\to \K$ we say that 
$f$ is integrable on $\Lambda\in\Sigma$ and write $f\in\mathcal{L}^{1}(\Lambda,\mu)$ 
if $\chi_{\Lambda}f\in \mathcal{L}^{1}(X,\mu)$. Then we set 
\[
\int\limits_{\Lambda} f(x)  \d\mu(x):=\int\limits_{X} \chi_{\Lambda}(x)f(x)  \d\mu(x).
\]

\begin{defn}[Pettis-integral]\label{def:integral}
 	Let $(X, \Sigma, \mu)$ be a measure space and $E$ an lcHs. 
 	A function $f\colon X \to E$ is called weakly (scalarly) measurable if the function 
 	$e'\circ  f  \colon X\to \K$, $ (e'\circ f )(x) := \langle e' , f(x) \rangle:=e'(f(x)),$ 
 	is measurable for all $e' \in E'$.  
 	A weakly measurable function is said to be weakly (scalarly) integrable 
 	if $e' \circ f \in \mathcal{L}^{1}(X,\mu)$. 
 	A function $f\colon X\to E$ is called Pettis-integrable on $\Lambda\in\Sigma$ 
 	if it is weakly integrable on $\Lambda$ and
 	\[
 	\exists\; e_{\Lambda} \in E \; \forall e' \in E': \langle e' , e_{\Lambda} \rangle 
 	= \int\limits_{\Lambda} \langle e' , f(x) \rangle \d\mu(x). 
 	\]  
 	In this case $e_{\Lambda}$ is unique due to $E$ being Hausdorff and we set 
 	\[
 	 \int\limits_{\Lambda} f(x) \d\mu(x):=e_{\Lambda}.
 	\]	
\end{defn}	 

If we consider the measure space $(X, \mathscr{L}(X), \lambda)$ of Lebesgue measurable sets 
for $X\subset\R^{d}$, we just write $\d x:=\d\lambda(x)$.

\begin{lem}\label{lem:pettis.loc.complete}
 Let $E$ be a locally complete lcHs, $\Omega\subset\R^{d}$ open and $f\colon\Omega\to E$.
 If $f$ is weakly $\mathcal{C}^{1}$, i.e.\ $e'\circ f\in\mathcal{C}^{1}(\Omega)$ 
 for every $e'\in E'$, then $f$ is Pettis-integrable on every compact subset 
 of $K\subset\Omega$ with respect to any locally finite measure $\mu$ on $\Omega$ and
 \[
  p_{\alpha}\bigl(\int_{K}f(x)\d\mu(x)\bigr)\leq \mu(K)\sup_{x\in K}p_{\alpha}(f(x)),\quad\alpha\in\mathfrak{A}.
 \]
\end{lem}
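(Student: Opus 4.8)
The plan is to realise the integral inside a suitable Banach disk furnished by local completeness. First I would observe that for every $e'\in E'$ the function $e'\circ f$ lies in $\mathcal{C}^{1}(\Omega)$, hence is continuous and therefore bounded on the compact set $K$; since $\mu$ is locally finite we have $\mu(K)<\infty$, so $e'\circ f\in\mathcal{L}^{1}(K,\mu)$, which already yields weak integrability of $f$ on $K$. Moreover $f$ is weakly continuous, so $f(K)$ is weakly bounded and thus bounded in $E$ by Mackey's theorem. It then remains to produce the vector $e_{K}$ representing the scalar integrals and to prove the seminorm estimate.

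The key step is to build a Banach disk that both contains $f(K)$ and controls the first-order behaviour of $f$. Fix $h_{0}>0$ so small that $z+he_{j}\in\Omega$ for all $z\in K$, $1\le j\le d$ and $0<|h|\le h_{0}$, and set $Q_{h,j}(z):=\tfrac{1}{h}\bigl(f(z+he_{j})-f(z)\bigr)$. For each fixed $e'\in E'$ the quotients $e'(Q_{h,j}(z))=\tfrac{1}{h}\bigl((e'\circ f)(z+he_{j})-(e'\circ f)(z)\bigr)$ are bounded uniformly in $z\in K$, in $j$ and in $0<|h|\le h_{0}$, because $e'\circ f$ is $\mathcal{C}^{1}$ (mean value theorem together with continuity of $\partial_{j}(e'\circ f)$ on a compact set). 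Hence the set $B:=\{Q_{h,j}(z)\}$ is weakly bounded, so bounded in $E$, and $D:=\oacx\bigl(f(K)\cup B\bigr)$ is a closed bounded disk. By local completeness $E_{D}$ is a Banach space with norm $\|y\|_{D}=\sup_{e'\in D^{\circ}}|e'(y)|$. For $e'\in D^{\circ}$ we have $|e'(Q_{h,j}(z))|\le 1$, and letting $h\to 0$ gives $|\partial_{j}(e'\circ f)(z)|\le 1$ for all $z\in K$ and all $j$; integrating along coordinate segments yields, for $x,x'$ in a common small ball, $|e'(f(x)-f(x'))|\le\sqrt{d}\,|x-x'|$ uniformly in $e'\in D^{\circ}$. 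Taking the supremum over $D^{\circ}$ gives $\|f(x)-f(x')\|_{D}\le\sqrt{d}\,|x-x'|$, so $f\colon K\to E_{D}$ is locally Lipschitz and hence continuous. I expect this passage — upgrading the merely weak $\mathcal{C}^{1}$ hypothesis to genuine continuity into a Banach disk — to be the main obstacle, and it is precisely here that the gap between $\mathcal{C}^{0}$ and $\mathcal{C}^{1}$ and the local completeness of $E$ enter.

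Once $f$ is continuous into the Banach space $E_{D}$, it is Riemann integrable on the compact set $K$: the tagged Riemann sums $\sum_{i}\mu(A_{i})f(\xi_{i})$ lie in $\mu(K)D\subset E_{D}$ and form a $\|\cdot\|_{D}$-Cauchy net by uniform continuity, so they converge to some $e_{K}\in E_{D}\subset E$. Since the inclusion $E_{D}\hookrightarrow E$ is continuous, the same sums converge to $e_{K}$ in $E$, and testing with $e'\in E'$ gives $e'(e_{K})=\int_{K}(e'\circ f)\,\d\mu$, so $e_{K}$ is the Pettis integral of $f$ on $K$. For the final estimate I would write each seminorm as $p_{\alpha}(y)=\sup_{e'\in B_{\alpha}^{\circ}}|e'(y)|$, with $B_{\alpha}^{\circ}$ the (equicontinuous) polar of the seminorm ball, and conclude
\[
p_{\alpha}\Bigl(\int_{K}f(x)\,\d\mu(x)\Bigr)=\sup_{e'\in B_{\alpha}^{\circ}}\Bigl|\int_{K}(e'\circ f)(x)\,\d\mu(x)\Bigr|\le\mu(K)\sup_{x\in K}\sup_{e'\in B_{\alpha}^{\circ}}|e'(f(x))|=\mu(K)\sup_{x\in K}p_{\alpha}(f(x)),
\]
which is the asserted inequality.
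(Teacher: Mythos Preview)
Your argument is correct and follows a genuinely different route from the paper. The paper does not build the integral inside a Banach disk at all; instead it considers the linear form $I(e'):=\int_{K}\langle e',f(x)\rangle\,\d\mu(x)$, notes the estimate $|I(e')|\le\mu(K)\sup_{x\in\oacx(f(K))}|e'(x)|$, invokes a result of Bonet (\cite[Proposition~2, p.~354]{Bonet2002}) asserting that $\oacx(f(K))$ is \emph{compact} whenever $f$ is weakly $\mathcal{C}^{1}$ into a locally complete space, and then concludes $I\in(E_{\kappa}')'\cong E$ by the Mackey--Arens theorem. Your construction is more self-contained --- it essentially reproves the relevant content of that cited proposition by hand and replaces the duality step by a direct Bochner-type integration in the Banach space $E_{D}$ --- whereas the paper's proof is shorter but leans on two external theorems. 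The final seminorm estimate via $p_{\alpha}(\cdot)=\sup_{e'\in B_{\alpha}^{\circ}}|e'(\cdot)|$ is identical in both.

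One small technical gap to close: in the Lipschitz step you integrate $\partial_{j}(e'\circ f)$ along the coordinate segment joining $x$ and $x'$, but the bound $|\partial_{j}(e'\circ f)(z)|\le 1$ has only been established for $z\in K$, and that segment need not remain in $K$. The repair is routine: fix a compact $K'\subset\Omega$ with $K\subset\operatorname{int}(K')$, define the set $B$ of difference quotients with base points $z\in K'$ (still weakly bounded by the same mean-value reasoning, now on a compact neighbourhood of $K'$), and form $D:=\oacx\bigl(f(K')\cup B\bigr)$. Then the derivative bound holds throughout $K'$, and for $x,x'\in K$ sufficiently close the coordinate segments stay inside $K'$, so the estimate $\|f(x)-f(x')\|_{D}\le\sqrt{d}\,|x-x'|$ goes through and continuity of $f\colon K\to E_{D}$ follows.
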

\begin{proof}
 Let $K\subset\Omega$ be compact and $(\Omega,\Sigma,\mu)$ a measure space with locally finite measure $\mu$, 
 i.e.\ $\Sigma$ contains the Borel $\sigma$-algebra $\mathcal{B}(\Omega)$ on $\Omega$ and 
 for every $x\in\Omega$ there is a neighbourhood $U_{x}\subset\Omega$ of $x$ such that $\mu(U_{x})<\infty$. 
 Since the map $e'\circ f$ is differentiable for every $e'\in E'$, thus Borel-measurable, 
 and $\mathcal{B}(\Omega)\subset \Sigma$, it is measurable. 
 We deduce that $e'\circ f\in \mathcal{L}^{1}(K,\mu)$ for every $e'\in E'$ because locally finite measures 
 are finite on compact sets. Hence the map 
 \[
  I\colon E'\to \mathbb{K},\;I(e'):=\int_{K}\langle e',f(x)\rangle\d\mu(x)
 \]
 is well-defined and linear. We estimate
 \[
  |I(e')|\leq \mu(K) \sup_{x\in f(K)}|e'(x)|\leq\mu(K) \sup_{x\in \oacx(f(K))}|e'(x)|,\quad e'\in E'.
 \]
 Due to $f$ being weakly $\mathcal{C}^{1}$ and \cite[Proposition 2, p.\ 354]{Bonet2002} 
 the absolutely convex set $\oacx(f(K))$ is compact 
 yielding $I\in(E_{\kappa}')'\cong E$ by the theorem of Mackey-Arens which means that there is $e_{K}\in E$ such that
 \[
  \langle e',e_{K}\rangle=I(e')=\int_{K}\langle e',f(x)\rangle\d\mu(x),\quad e'\in E'.
 \]
 Therefore $f$ is Pettis-integrable on $K$ with respect to $\mu$.
 For $\alpha\in\mathfrak{A}$ we set $B_{\alpha}:=\{x\in E\;|\;p_{\alpha}(x)<1\}$ and observe that
\begin{align*}
 p_{\alpha}\bigl(\int_{K}f(x)\d\mu(x)\bigr)
 &=\sup_{e'\in B_{\alpha}^{\circ}}\bigl|\langle e',\int_{K}f(x)\d\mu(x)\rangle\bigr|
 =\sup_{e'\in B_{\alpha}^{\circ}}\bigl|\int_{K}e'(f(x))\d\mu(x)\bigr|\\
 &\leq \mu(K)\sup_{e'\in B_{\alpha}^{\circ}}\sup_{x\in K}|e'(f(x))|
 = \mu(K)\sup_{x\in K}p_{\alpha}(f(x))
\end{align*}
where we used \cite[Proposition 22.14, p.\ 256]{meisevogt1997} in the first and last equation 
to get from $p_{\alpha}$ to $\sup_{e'\in B_{\alpha}^{\circ}}$ and back.
\end{proof}

For an lcHs $E$ we define the Schwartz space of $E$-valued rapidely decreasing functions by
\[
\mathcal{S}(\R^{d},E)
:=\{f\in \mathcal{C}^{\infty}(\R^{d},E)\;|\;
\forall\;l\in\N,\,\alpha\in \mathfrak{A}:\;|f|_{l,\alpha}<\infty\}
\]
where 
\[
 |f|_{l,\alpha}:=\sup_{\substack{x\in\R^{d}\\ \beta\in\N_{0}^{d}, |\beta|\leq l}}
 p_{\alpha}\bigl((\partial^{\beta})^{E}f(x)\bigr)(1+|x|^{2})^{l/2}.
\]
We recall the definition of the Hermite functions. For $n\in\N_{0}$ we set 
\[
h_{n}\colon\R\to\R,\;
h_{n}(x):=(2^{n}n!\sqrt{\pi})^{-1/2}(x-\frac{d}{dx})^{n}e^{-x^{2}/2}=(2^{n}n!\sqrt{\pi})^{-1/2}H_{n}(x)e^{-x^{2}/2},
\]
with the Hermite polynomials $H_{n}$ of degree $n$ which can be computed recursively by 
\[
 H_{0}(x)=1\quad\text{and}\quad H_{n+1}(x)=2xH_{n}(x)-H_{n}'(x), \quad x\in\R,\; n\in\N_{0}.
\]
For $n=(n_{k})\in\N_{0}^{d}$ we define the $n$-th Hermite function by
\[
h_{n}\colon\R^{d}\to\R,\;
h_{n}(x):=\prod_{k=1}^{d} h_{n_{k}}(x_{k}),\quad\text{and}\quad
H_{n}\colon\R^{d}\to\R,\;
H_{n}(x):=\prod_{k=1}^{d} H_{n_{k}}(x_{k}).
\]

\begin{prop}\label{prop:Schwartz_pettis_int}
Let $E$ be a sequentially complete lcHs, $f\in\mathcal{S}(\R^{d},E)$ and $n\in\N_{0}^{d}$. 
Then $fh_{n}$ is Pettis-integrable on $\R^{d}$.
\end{prop}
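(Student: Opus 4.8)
The plan is to realise the integral of $fh_{n}$ over the non-compact domain $\R^{d}$ as a limit of Pettis integrals over an exhausting sequence of balls, each of which exists by \prettyref{lem:pettis.loc.complete}, and then to control the tail by playing the super-polynomial decay of $fh_{n}$ against the polynomial growth of Lebesgue volume. Since $E$ is sequentially complete it is in particular locally complete, so \prettyref{lem:pettis.loc.complete} applies, and we take $\mu=\lambda$ the (locally finite) Lebesgue measure.

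First I would establish weak integrability and record an integrable scalar majorant. Given $e'\in E'$ there are $\alpha\in\mathfrak{A}$ and $C>0$ with $|e'(y)|\leq Cp_{\alpha}(y)$ for all $y\in E$. As $f\in\mathcal{S}(\R^{d},E)$ we have $p_{\alpha}(f(x))\leq|f|_{l,\alpha}(1+|x|^{2})^{-l/2}$ for every $l\in\N$, and $h_{n}$ is a scalar Schwartz function, so $x\mapsto|e'(f(x))h_{n}(x)|$ decays faster than any power of $(1+|x|^{2})$. Hence $e'\circ(fh_{n})=(e'\circ f)h_{n}\in\mathcal{L}^{1}(\R^{d},\lambda)$, giving weak integrability, and likewise $x\mapsto p_{\alpha}(f(x)h_{n}(x))=|h_{n}(x)|p_{\alpha}(f(x))$ is Lebesgue integrable on $\R^{d}$ for each $\alpha\in\mathfrak{A}$.

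Next I would produce the integral vector. Since $f\in\mathcal{C}^{\infty}(\R^{d},E)$ and $h_{n}\in\mathcal{C}^{\infty}(\R^{d})$, the integrand $fh_{n}$ is weakly $\mathcal{C}^{1}$, so \prettyref{lem:pettis.loc.complete} yields that the Pettis integrals $e_{R}:=\int_{B_{R}}f(x)h_{n}(x)\d x$ exist in $E$ for every closed ball $B_{R}$ around $0$. To see that $(e_{2^{N}})_{N}$ is Cauchy I would use the dyadic annuli $A_{0}:=\{x\in\R^{d}\;|\;|x|<1\}$ and $A_{k}:=\{x\in\R^{d}\;|\;2^{k-1}\leq|x|<2^{k}\}$ for $k\geq1$, applying \prettyref{lem:pettis.loc.complete} on each (compact) closure $\overline{A_{k}}$. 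With $\lambda(A_{k})\leq c_{d}2^{kd}$ and, for suitably large $M$, $p_{\alpha}(f(x)h_{n}(x))\leq C_{M}(1+|x|^{2})^{-M}\leq C_{M}'2^{-2Mk}$ on $A_{k}$, this gives
\[
p_{\alpha}\Bigl(\int_{A_{k}}f(x)h_{n}(x)\d x\Bigr)\leq C''2^{k(d-2M)},\quad k\geq1.
\]
Choosing $M$ with $2M>d+1$ makes $\sum_{k}2^{k(d-2M)}$ summable, so by additivity $e_{2^{N}}=\sum_{k=0}^{N}\int_{A_{k}}f(x)h_{n}(x)\d x$ is a Cauchy sequence in $E$, and by sequential completeness it converges to some $e\in E$.

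Finally I would identify $e$ as the Pettis integral over $\R^{d}$. For $e'\in E'$, continuity of $e'$ gives $\langle e',e\rangle=\lim_{N}\langle e',e_{2^{N}}\rangle$; since $e_{2^{N}}$ is the Pettis integral on $B_{2^{N}}$ we have $\langle e',e_{2^{N}}\rangle=\int_{B_{2^{N}}}\langle e',f(x)h_{n}(x)\rangle\d x$; and dominated convergence against the integrable majorant from the first step passes this to $\int_{\R^{d}}\langle e',f(x)h_{n}(x)\rangle\d x$. Thus $fh_{n}$ is Pettis-integrable on $\R^{d}$. The delicate point is precisely the Cauchy estimate: the only bound available from \prettyref{lem:pettis.loc.complete} is the crude $\mu(K)\sup_{K}p_{\alpha}(\cdot)$, which drags along the volume factor $\lambda(A_{k})\sim2^{kd}$, so the whole argument hinges on the annular decomposition in which the rapid decay of $fh_{n}$ outruns this polynomial growth.
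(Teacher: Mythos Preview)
Your argument is correct and follows the same architecture as the paper's proof: produce Pettis integrals over an exhausting family of compacta via \prettyref{lem:pettis.loc.complete}, show the resulting sequence is Cauchy, invoke sequential completeness, and identify the limit by dominated convergence.

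The one genuine difference is in the Cauchy estimate. The paper works with cubes $[-k,k]^{d}$ and exploits the explicit factorisation $h_{n}(x)=C_{n}H_{n}(x)e^{-|x|^{2}/2}$: it absorbs the polynomial $H_{n}$ into a Schwartz seminorm of $f$ and then integrates the Gaussian tail explicitly over $[-k,k]^{d}\setminus[-m,m]^{d}$. You instead use a dyadic annular decomposition and the crude bound $\lambda(A_{k})\cdot\sup_{A_{k}}p_{\alpha}(fh_{n})$ from \prettyref{lem:pettis.loc.complete}, letting the rapid decay $p_{\alpha}(f(x)h_{n}(x))\lesssim(1+|x|^{2})^{-M}$ beat the polynomial volume growth $\lambda(A_{k})\lesssim 2^{kd}$. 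Your route is a bit more robust in that it never uses the Hermite structure of $h_{n}$ (boundedness of $h_{n}$ suffices), while the paper's explicit computation avoids the annular bookkeeping. Both are standard and equally valid.
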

\begin{proof}
For $k\in\N$ we define the Pettis-integral
\[
 e_{k}:=\int_{[-k,k]^{d}}f(x)h_{n}(x)\d x
\]
which is a well-defined element of $E$ by Lemma \ref{lem:pettis.loc.complete}. We claim that $(e_{k})$ 
is a Cauchy sequence in $E$. First, we notice that there are $j\in\N$ and $C>0$ such 
that $|H_{n}(x)|\leq C(1+|x|^{2})^{j/2}$ for all $x\in\R^{d}$ as $H_{n}$ is a product of polynomials 
in one variable. Now, let $\alpha\in\mathfrak{A}$,  $k,m\in\N$, $k>m$, and 
set $C_{n}:=(\prod_{i=1}^{n}2^{n_{i}}n_{i}!\sqrt{\pi})^{-1/2}$ as well as $Q_{k,m}:=[-k,k]^{d}\setminus [-m,m]^{d}$. 
We observe that $|\prod_{i=1}^{d}x_{i}|\geq 1$ for every $x\in Q_{k,m}$ and 
\begin{flalign*}
&\quad p_{\alpha}(e_{k}-e_{m})\\
&=\sup_{e'\in B_{\alpha}^{\circ}}|e'(e_{k}-e_{m})|
=\sup_{e'\in B_{\alpha}^{\circ}}\bigl| \int_{Q_{k,m}}\langle e',f(x)h_{n}(x)\rangle\d x\bigr|\\
&\leq C_{n}\int_{Q_{k,m}}e^{-|x|^{2}/2}\d x 
\sup_{e'\in B_{\alpha}^{\circ}}\sup_{x\in \R^{d}}|e'(f(x)H_{n}(x))|\\
&= C_{n}\int_{Q_{k,m}}e^{-|x|^{2}/2}\d x 
\sup_{x\in \R^{d}}p_{\alpha}(f(x))|H_{n}(x)|\\
&\leq C_{n}C|f|_{j,\alpha}\int_{Q_{k,m}}\bigl|\prod_{i=1}^{d}x_{i}\bigr|e^{-|x|^{2}/2}\d x \\
&=C_{n}C|f|_{j,\alpha}\bigl((2\int_{[0,k]}xe^{-x^{2}/2}\d x)^{d}-(2\int_{[0,m]}xe^{-x^{2}/2}\d x)^{d}\bigr)\\
&=2^{d}C_{n}C|f|_{j,\alpha}\bigl((1-e^{-k^{2}/2})^{d}-(1-e^{-m^{2}/2})^{d}\bigr)
\end{flalign*}
proving our claim. Since $E$ is sequentially complete, the limit $y:=\lim_{k\to\infty} e_{k}$ exists in $E$. 
From $e'\circ f\in\mathcal{S}(\R^{d})$ for every $e'\in E'$ and the dominated convergence theorem we deduce
\[
 \langle e',y\rangle=\lim_{k\to\infty} \langle e',e_{k}\rangle=\lim_{k\to\infty}\int_{[-k,k]^{d}}\langle e',f(x)\rangle h_{n}(x)\d x
 =\int_{\R^{d}}\langle e',f(x)h_{n}(x)\rangle \d x, \quad e'\in E',
\]
which yields the Pettis-integrability of $fh_{n}$ on $\R^{d}$ with $\int_{\R^{d}}f(x)h_{n}(x)\d x= y$. 
\end{proof}

Due to the previous proposition we can define the $n$-th Fourier coefficient of 
$f\in\mathcal{S}(\R^{d},E)$ by
\[
\widehat{f}(n):=\int_{\R^{d}}f(x)\overline{h_{n}(x)}\d x=\int_{\R^{d}}f(x)h_{n}(x)\d x, \quad n\in\N_{0}^{d},
\]
if $E$ is sequentially complete.
We know that the map 
\[
 \mathscr{F}^{\K}\colon\mathcal{S}(\R^{d})\to s(\N_{0}^{d}),\;
 \mathscr{F}^{\K}(f):=\bigl(\widehat{f}(n)\bigr)_{n\in\N_{0}^{d}},
\]
is an isomorphism and its inverse is given by 
\[
 (\mathscr{F}^{\K})^{-1}\colon s(\N_{0}^{d})\to \mathcal{S}(\R^{d}),\;
 (\mathscr{F}^{\K})^{-1}(x):=\sum_{n\in\N_{0}^{d}}x_{n}h_{n},
\]
(see e.g.\ \cite[Satz 3.7, p.\ 66]{Kaballo}). 
It is already known that $\mathcal{S}(\R^{d})$ and $\mathcal{S}(\R^{d},E)$ are $\varepsilon$-compatible 
if $E$ is quasi-complete by \cite[Proposition 9, p.\ 108, Th\'{e}or\`{e}me 1, p.\ 111]{Schwartz1955} 
(cf.\ \cite[3.22 Example, p.\ 14]{kruse2017}). We improve this to sequentially complete $E$ 
and derive a Schauder decomposition of $\mathcal{S}(\R^{d},E)$ as well using 
our observations on the sequence space $s(\N_{0}^{d},E)$. 

\begin{thm}\label{thm:fourier.rap.dec}
Let $E$ be a sequentially complete lcHs. Then the following holds.
\begin{enumerate}
 \item [a)] If $E$ is sequentially complete, then 
 \[
  \mathscr{F}^{E}\colon \mathcal{S}(\R^{d},E)\to s(\N_{0}^{d},E),
  \;\mathscr{F}^{E}(f):=\bigl(\widehat{f}(n)\bigr)_{n\in\N_{0}^{d}},
 \]
 is an isomorphism, $\mathcal{S}(\R^{d},E)\cong\mathcal{S}(\R^{d})\varepsilon E$ and
 \[
 \mathscr{F}^{E}=S_{s(\N_{0}^{d})}\circ(\mathscr{F}^{\K}\varepsilon\id_{E})\circ S_{\mathcal{S}(\R^{d})}^{-1}.
 \]
\item [b)] $(\sum_{n\in\N_{0}^{d}, |n|\leq k}\mathscr{F}^{E}_{n}h_{n})_{k\in\N}$ 
is a Schauder decomposition of $\mathcal{S}(\R^{d},E)$ and 
\[
f=\sum_{n\in\N_{0}^{d}}\widehat{f}(n)h_{n}, \quad f\in\mathcal{S}(\R^{d},E).
\]
\end{enumerate}
\end{thm}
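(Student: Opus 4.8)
The plan is to deduce everything from the scalar Fourier isomorphism $\mathscr{F}^{\K}$ together with \prettyref{prop:reverse_Schauder}, the only genuinely new input being that the series representation survives the weakening from quasi-complete to sequentially complete $E$. First I would record that $S_{\mathcal{S}(\R^{d})}$ is an isomorphism \emph{into} for sequentially complete $E$ by \cite[3.9 Theorem, p.\ 9]{kruse2017}, that the Hermite functions $(h_{n})_{n\in\N_{0}^{d}}$ form a Schauder basis of the Fr\'echet (hence barrelled) space $\mathcal{S}(\R^{d})$ with coefficient functionals $\lambda_{n}^{\K}\colon g\mapsto\widehat{g}(n)$, so that this basis is equicontinuous, and that the associated expansion operators $R_{k}\colon g\mapsto\sum_{|n|\leq k}\widehat{g}(n)h_{n}$ converge to $\id$ in $L_{\kappa}(\mathcal{S}(\R^{d}),\mathcal{S}(\R^{d}))$ exactly as in the proof of \prettyref{thm:schauder_decomp}. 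By \prettyref{prop:reverse_Schauder} it then suffices to establish the series representation $f=\sum_{n\in\N_{0}^{d}}\widehat{f}(n)h_{n}$ in $\mathcal{S}(\R^{d},E)$ for every $f\in\mathcal{S}(\R^{d},E)$, with $\widehat{f}(n)\in E$ well-defined by \prettyref{prop:Schwartz_pettis_int}.

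This series representation is the crux. Put $s_{k}:=\sum_{|n|\leq k}\widehat{f}(n)h_{n}\in\mathcal{S}(\R^{d},E)$ and fix $l\in\N$ and $\alpha\in\mathfrak{A}$. Writing $p_{\alpha}=\sup_{e'\in B_{\alpha}^{\circ}}|e'(\cdot)|$ and using that $e'\in E'$ commutes with the partial derivatives $(\partial^{\beta})^{E}$, one has $e'\circ f\in\mathcal{S}(\R^{d})$ and, by the defining property of the Pettis integral, $e'(\widehat{f}(n))=\widehat{e'\circ f}(n)$, so that $e'\circ s_{k}=R_{k}(e'\circ f)$. Consequently
\[
|f-s_{k}|_{l,\alpha}=\sup_{e'\in B_{\alpha}^{\circ}}\bigl|e'\circ f-R_{k}(e'\circ f)\bigr|_{l}.
\]
The estimate $|e'\circ f|_{l}\leq|f|_{l,\alpha}$ for $e'\in B_{\alpha}^{\circ}$ shows that $M:=\{e'\circ f\;|\;e'\in B_{\alpha}^{\circ}\}$ is bounded in $\mathcal{S}(\R^{d})$; since $\mathcal{S}(\R^{d})$ is Fr\'echet--Montel, $M$ is relatively compact and $\oacx(M)$ is an absolutely convex compact set. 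Because $\id-R_{k}$ is linear and $|\cdot|_{l}$ is a seminorm, the right-hand side is bounded by $\sup_{g\in\oacx(M)}|g-R_{k}(g)|_{l}$, which tends to $0$ by the $L_{\kappa}$-convergence $R_{k}\to\id$. Hence $s_{k}\to f$ in $\mathcal{S}(\R^{d},E)$. The main obstacle is precisely this step: upgrading the pointwise-in-$e'$ scalar Fourier convergence to convergence in the topology of $\mathcal{S}(\R^{d},E)$, uniformly over the equicontinuous polar $B_{\alpha}^{\circ}$; the Montel property of $\mathcal{S}(\R^{d})$, which turns the bounded set $M$ into a relatively compact one, is exactly what unlocks it.

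With the series representation in hand, \prettyref{prop:reverse_Schauder}\,b) yields $\mathcal{S}(\R^{d},E)\cong\mathcal{S}(\R^{d})\varepsilon E$ via $S_{\mathcal{S}(\R^{d})}$, so $\varepsilon$-compatibility now holds. For part a) I would then observe that $\mathscr{F}^{\K}\varepsilon\id_{E}$ is an isomorphism since $\mathscr{F}^{\K}$ is, and that $S_{s(\N_{0}^{d})}$ is an isomorphism by \prettyref{thm:sequence.spaces}\,(i); hence $S_{s(\N_{0}^{d})}\circ(\mathscr{F}^{\K}\varepsilon\id_{E})\circ S_{\mathcal{S}(\R^{d})}^{-1}$ is an isomorphism. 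To identify it with $\mathscr{F}^{E}$ I must verify the Pettis-integral interchange $\int_{\R^{d}}u(\delta_{x})h_{n}(x)\,\d x=u(\delta_{n}\circ\mathscr{F}^{\K})$ for $u\in\mathcal{S}(\R^{d})\varepsilon E$; testing against $e'\in E'$, the functional $e'\circ u$ is represented by some $g\in\mathcal{S}(\R^{d})$ by Mackey--Arens and reflexivity of $\mathcal{S}(\R^{d})$, whence both $e'\bigl(\int u(\delta_{x})h_{n}\,\d x\bigr)$ and $e'(u(\delta_{n}\circ\mathscr{F}^{\K}))$ equal $\widehat{g}(n)$, and $E$ being Hausdorff gives the claim. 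Since the right-hand side of the factorisation evaluated at $\delta_{n}$ is exactly $u(\delta_{n}\circ\mathscr{F}^{\K})$ and $\widehat{S(u)}(n)=\int u(\delta_{x})h_{n}\,\d x$, the two maps agree, proving a).

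Finally, for part b) I would invoke \prettyref{cor:schauder_decomp} with $\lambda_{n}^{E}:=\mathscr{F}^{E}_{n}\colon f\mapsto\widehat{f}(n)$: the $\varepsilon$-compatibility established above holds, $(h_{n})$ is an equicontinuous Schauder basis, and the consistency condition \eqref{eq:schauder_decomp} reads $\widehat{S(u)}(n)=u(\lambda_{n}^{\K})$ with $\lambda_{n}^{\K}=\delta_{n}\circ\mathscr{F}^{\K}$, which is the interchange identity just proved. The corollary then delivers both the Schauder decomposition $(\sum_{|n|\leq k}\mathscr{F}^{E}_{n}h_{n})_{k}$ and the Fourier expansion $f=\sum_{n\in\N_{0}^{d}}\widehat{f}(n)h_{n}$.
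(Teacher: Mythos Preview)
Your argument is correct, but the route genuinely differs from the paper's. The paper does \emph{not} first prove the Hermite expansion in $\mathcal{S}(\R^{d},E)$ and then invoke \prettyref{prop:reverse_Schauder}. Instead it starts on the sequence-space side: it first checks that $\mathscr{F}^{E}(f)\in s(\N_{0}^{d},E)$ via Mackey's theorem (weakly bounded $\Rightarrow$ bounded), then uses the already-established isomorphism $S_{s(\N_{0}^{d})}$ from \prettyref{thm:sequence.spaces} together with $(\mathscr{F}^{\K})^{-1}\varepsilon\id_{E}$ and the embedding $S_{\mathcal{S}(\R^{d})}$ to build the composite $s(\N_{0}^{d},E)\to\mathcal{S}(\R^{d},E)$, computes it explicitly as $w\mapsto\sum_{n}w_{n}h_{n}$, and shows it is a right inverse of $\mathscr{F}^{E}$; surjectivity of $S_{\mathcal{S}(\R^{d})}$ then drops out as a byproduct, and part b) is read off from the Schauder decomposition of $s(\N_{0}^{d},E)$ in \prettyref{prop:vector_valued_seq_spaces} transported via $\mathscr{F}^{E}$.

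Your approach instead exploits the Montel property of $\mathcal{S}(\R^{d})$ to prove convergence of $\sum_{n}\widehat{f}(n)h_{n}$ directly in $\mathcal{S}(\R^{d},E)$: the set $\{e'\circ f: e'\in B_{\alpha}^{\circ}\}$ is bounded, hence relatively compact, and the $L_{\kappa}$-convergence $R_{k}\to\id$ finishes it. This is a clean, self-contained argument that avoids the detour through $s(\N_{0}^{d},E)$ and makes transparent \emph{why} sequential completeness suffices (it is used only for the Pettis integral in \prettyref{prop:Schwartz_pettis_int} and for \prettyref{prop:reverse_Schauder}\,b)). The paper's route, by contrast, is more modular: it recycles the sequence-space result and never touches the Montel property, at the price of a somewhat indirect surjectivity argument. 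Two minor remarks: the paper also invokes \cite[4.12 Proposition]{kruse2017} alongside \cite[3.9 Theorem]{kruse2017} to certify that $S_{\mathcal{S}(\R^{d})}$ is an isomorphism into (the derivative consistency is part of the required ``generator'' structure there), and your final appeal to \prettyref{cor:schauder_decomp} for part b) is slightly redundant since you have already established the series representation---you only need to observe that the partial sums are the projections $P_{k}^{E}$ and check the elementary projection identities.
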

\begin{proof}
First, we show that the map $\mathscr{F}^{E}$ is well-defined. Let $f\in\mathcal{S}(\R^{d},E)$. 
Then $e'\circ f\in \mathcal{S}(\R^{d})$ and 
\[
 \langle e',\mathscr{F}^{E}(f)_{n}\rangle=\langle e',\widehat{f}(n)\rangle
 =\widehat{e'\circ f}(n)=\mathscr{F}^{\K}(e'\circ f)_{n}
\]
for every $n\in\N_{0}^{d}$ and $e'\in E'$. Thus we have $\mathscr{F}^{\K}(e'\circ f)\in s(\N_{0}^{d})$ 
for every $e'\in E'$ which implies by \cite[Mackey's theorem 23.15, p.\ 268]{meisevogt1997}
that $\mathscr{F}^{E}(f)\in s(\N_{0}^{d},E)$ and that $\mathscr{F}^{E}$ is well-defined.

We notice that
\[
 s(\N_{0}^{d},E) \cong s(\N_{0}^{d})\varepsilon E
 \cong \mathcal{S}(\mathbb{R}^{d})\varepsilon E 
 \tilde{\hookrightarrow} \mathcal{S}(\mathbb{R}^{d},E)
\]
where the first isomorphism is $S^{-1}_{s(\N_{0}^{d})}$ by \prettyref{thm:sequence.spaces} (i), 
the second is the map $(\mathscr{F}^{\K})^{-1}\varepsilon\id_{E}$
and the third isomorphism into is the map $S_{\mathcal{S}(\R^{d})}$ by 
\cite[3.9 Theorem, p.\ 9]{kruse2017} and \cite[4.12 Proposition, p.\ 22]{kruse2017} 
since $\mathcal{S}(\R^{d})$ is barrelled. 
Next, we show that $S_{\mathcal{S}(\R^{d})}\circ \bigl((\mathscr{F}^{\K})^{-1}\varepsilon\id_{E}\bigr) 
\circ S_{s(\N_{0}^{d})}^{-1}$ is surjective and the inverse of 
$\mathscr{F}^{E}$. We can explicitely compute the composition of these maps.
By the proof of \prettyref{prop:reverse_Schauder} b) we get
that the inverse of $S_{s(\N_{0}^{d})}$ is given by 
\[
 S_{s(\N_{0}^{d})}^{-1}\colon s(\N_{0}^{d},E) \to s(\N_{0}^{d}) \varepsilon E,\;
 w\mapsto \lim_{k\to\infty}\sum_{n\in\N_{0}^{d},|n|\leq k}\Theta(\varphi_{n}\otimes w_{n}).
\]
Let $w\in s(\N_{0}^{d},E)$. Then for every $x\in\R^{d}$ and $e'\in E'$
\begin{align*}
S_{s(\N_{0}^{d})}^{-1}\Bigl(\bigl((\mathscr{F}^{\K})^{-1}\bigr)^{t}(\delta_{x})\Bigr)
&=\sum_{n\in\N_{0}^{d}}\Theta(\varphi_{n}\otimes w_{n})\Bigl(\bigl((\mathscr{F}^{\K})^{-1}\bigr)^{t}(\delta_{x})\Bigr)\\
&=\sum_{n\in\N_{0}^{d}}(\mathscr{F}^{\K})^{-1}(\varphi_{n})(x)w_{n}
 =\sum_{n\in\N_{0}^{d}}w_{n}h_{n}(x)
\end{align*}
which gives 
\[
\bigl(S_{\mathcal{S}(\R^{d})}\circ \bigl((\mathscr{F}^{\K})^{-1}\varepsilon\id_{E}\bigr)
\circ S_{s(\N_{0}^{d})}^{-1}\bigr)(w)(x)
=\sum_{n\in\N_{0}^{d}}w_{n}h_{n}(x).
\]
Let $f\in\mathcal{S}(\R^{d},E)$. Then $w:=\mathscr{F}^{E}(f)\in s(\N_{0}^{d},E)$ and 
\begin{align*}
 e'(\sum_{n\in\N_{0}^{d}}w_{n}h_{n})
 &=\sum_{n\in\N_{0}^{d}}e'(\widehat{f}(n))h_{n}
 =\sum_{n\in\N_{0}^{d}}\widehat{e'\circ f}(n)h_{n}\\
 &=(\mathscr{F}^{\K})^{-1}\bigl((\widehat{e'\circ f}(n))_{n\in\N_{0}^{d}}\bigr)
 = e'\circ f
\end{align*}
for all $e'\in E'$ resulting in 
\[
f=\sum_{n\in\N_{0}^{d}}w_{n}h_{n}=\sum_{n\in\N_{0}^{d}}\mathscr{F}^{E}(f)_{n}h_{n}
\]
and
\[
\bigl(S_{\mathcal{S}(\R^{d})}\circ \bigl((\mathscr{F}^{\K})^{-1}\varepsilon\id_{E}\bigr)
\circ S_{s(\N_{0}^{d})}^{-1}\bigr)(w)(x)=f(x)
\]
for every $x\in\R^{d}$. Thus we conclude 
\[
 \bigl[\bigl(S_{\mathcal{S}(\R^{d})}\circ \bigl((\mathscr{F}^{\K})^{-1}\varepsilon\id_{E}\bigr)
 \circ S_{s(\N_{0}^{d})}^{-1}\bigr)\circ \mathscr{F}^{E}\bigr](f)=f
\]
yielding the surjectivity of the composition. 
Therefore $S_{\mathcal{S}(\R^{d})}\circ \bigl((\mathscr{F}^{\K})^{-1}\varepsilon\id_{E}\bigr)
\circ S_{s(\N_{0}^{d})}^{-1}$ is an isomorphism 
with right inverse $\mathscr{F}^{E}$ which implies that $\mathscr{F}^{E}$ is its inverse. 
In addition, the bijectivity of 
$S_{\mathcal{S}(\R^{d})}\circ \bigl((\mathscr{F}^{\K})^{-1}\varepsilon\id_{E}\bigr)
\circ S_{s(\N_{0}^{d})}^{-1}$ and $S_{s(\N_{0}^{d})}^{-1}$ implies that 
$S_{\mathcal{S}(\R^{d})}\circ \bigl((\mathscr{F}^{\K})^{-1}\varepsilon\id_{E}\bigr)$ 
is bijective and thus $S_{\mathcal{S}(\R^{d})}$ is surjective. 
Hence $S_{\mathcal{S}(\R^{d})}$ is also an isomorphism completing the proof of part a).

The rest of part b) follows from the isomorphy $\mathcal{S}(\R^{d},E)\cong s(\N_{0}^{d},E)$ 
via $\mathscr{F}^{E}$ and \prettyref{prop:vector_valued_seq_spaces} a).

\end{proof}

Our last example of this subsection is devoted to Fourier expansions of vector-valued $2\pi$-periodic smooth functions. 
We equip the space $\mathcal{C}^{\infty}(\R^{d},E)$ for locally convex Hausdorff $E$ 
with the system of seminorms generated by 
\[
|f|_{K,l,\alpha}:=\hspace{-0.2cm}\sup_{\substack{x\in \Omega\\ \beta\in\N_{0}^{d},|\beta|\leq l}}
p_{\alpha}((\partial^{\beta})^{E}f(x))\chi_{K}(x)
=\hspace{-0.2cm}\sup_{\substack{x\in K\\ \beta\in\N_{0}^{d},|\beta|\leq l}}
p_{\alpha}((\partial^{\beta})^{E}f(x)),\quad f\in\mathcal{C}^{\infty}(\R^{d},E),
\]
for $K\subset\R^{d}$ compact, $l\in\N_{0}$ and $\alpha\in\mathfrak{A}$.
By $\mathcal{C}^{\infty}_{2\pi}(\R^{d},E)$ we denote the topological subspace of 
$\mathcal{C}^{\infty}(\R^{d},E)$ consisting of the functions 
which are $2\pi$-periodic in each variable. 
Due to \prettyref{lem:pettis.loc.complete} we are able to define the $n$-th Fourier coefficient of 
$f\in\mathcal{C}^{\infty}_{2\pi}(\R^{d},E)$ by
\[
\widehat{f}(n):=(2\pi)^{-d}\int_{[-\pi,\pi]^{d}}f(x)e^{-i\langle n,x\rangle}\d x, \quad n\in\Z^{d},
\]
where $\langle\cdot,\cdot\rangle$ is the usual scalar product on $\R^{d}$, if $E$ is locally complete. 

\begin{thm}\label{thm:fourier}
Let $E$ be a locally complete lcHs over $\C$. 
Then a Schauder decomposition of $\mathcal{C}^{\infty}_{2\pi}(\R^{d},E)$ is given by 
$(\sum_{n\in\Z^{d}, |n|\leq k}\widehat{f}(n)e^{i\langle n,\cdot\rangle})_{k\in\N}$ and 
\[
f=\sum_{n\in\Z^{d}}\widehat{f}(n)e^{i\langle n,\cdot\rangle}, \quad f\in\mathcal{C}^{\infty}_{2\pi}(\R^{d},E).
\]
\end{thm}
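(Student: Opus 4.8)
The plan is to apply \prettyref{cor:schauder_decomp} with $\mathcal{F}(\Omega)=\mathcal{C}^{\infty}_{2\pi}(\R^{d})$ and $\mathcal{F}(\Omega,E)=\mathcal{C}^{\infty}_{2\pi}(\R^{d},E)$, taking as scalar Schauder basis the trigonometric system $f_{n}:=e^{i\langle n,\cdot\rangle}$, $n\in\Z^{d}$ (ordered by increasing $|n|$, so that the partial sums run over $\{|n|\leq k\}$ as in the statement), with coefficient functionals $\lambda_{n}^{\K}(f):=\widehat{f}(n)$ and $E$-valued functionals $\lambda_{n}^{E}(f):=\widehat{f}(n)$ given by the Pettis integral. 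Identifying $\mathcal{C}^{\infty}_{2\pi}(\R^{d})$ with the space $\mathcal{C}^{\infty}(\T^{d})$ of smooth functions on the compact torus, the three ingredients I must supply are: $\varepsilon$-compatibility of the pair; equicontinuity of the trigonometric basis; and the consistency condition \eqref{eq:schauder_decomp}. The last of these is the real content.

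First I would record the two easy ingredients. The Fourier transform is a (linear topological) isomorphism $\mathscr{F}^{\K}\colon\mathcal{C}^{\infty}_{2\pi}(\R^{d})\to s(\Z^{d})$ with inverse $x\mapsto\sum_{n\in\Z^{d}}x_{n}e^{i\langle n,\cdot\rangle}$; pulling the canonical unconditional basis of $s(\Z^{d})$ back through $(\mathscr{F}^{\K})^{-1}$ shows that $(e^{i\langle n,\cdot\rangle})_{n}$ is a Schauder basis of $\mathcal{C}^{\infty}_{2\pi}(\R^{d})$ whose coefficient functionals are exactly $f\mapsto\widehat{f}(n)$. Since $\mathcal{C}^{\infty}_{2\pi}(\R^{d})$ is a Fr\'{e}chet, hence barrelled, space, this basis is automatically equicontinuous, as noted in the introduction. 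The $\varepsilon$-compatibility of $\mathcal{C}^{\infty}_{2\pi}(\R^{d})$ and $\mathcal{C}^{\infty}_{2\pi}(\R^{d},E)$ for locally complete $E$ I would take from the sufficient conditions of \cite[3.17 Theorem, p.\ 12]{kruse2017}; the point that makes local completeness enough here (rather than quasi-completeness) is that the torus is compact, so all integrals occurring are over compact sets and \prettyref{lem:pettis.loc.complete} already guarantees that $\widehat{f}(n)$ is a well-defined element of $E$.

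The heart of the proof is the verification of \eqref{eq:schauder_decomp}, i.e.\ $\lambda_{n}^{E}(S(u))=u(\lambda_{n}^{\K})$ for every $u\in\mathcal{C}^{\infty}_{2\pi}(\R^{d})\varepsilon E$ and $n\in\Z^{d}$. I would prove this by reducing to scalars. Fix $e'\in E'$ and set $g:=e'\circ S(u)$. The definition of the Pettis integral gives $e'(\lambda_{n}^{E}(S(u)))=e'(\widehat{S(u)}(n))=\widehat{g}(n)$, since $e'$ may be pulled inside the integral. On the other hand, $e'\circ u\in(\mathcal{C}^{\infty}_{2\pi}(\R^{d})_{\kappa}')'$, and under the canonical identification $(\mathcal{C}^{\infty}_{2\pi}(\R^{d})_{\kappa}')'\cong\mathcal{C}^{\infty}_{2\pi}(\R^{d})$ (valid by Mackey--Arens because $\mathcal{C}^{\infty}_{2\pi}(\R^{d})$ is Fr\'{e}chet) the functional $e'\circ u$ corresponds precisely to $g$, because evaluating at $\delta_{x}$ gives $(e'\circ u)(\delta_{x})=e'(S(u)(x))=g(x)$. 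Consequently $e'(u(\lambda_{n}^{\K}))=(e'\circ u)(\lambda_{n}^{\K})=\lambda_{n}^{\K}(g)=\widehat{g}(n)$. Comparing the two computations yields $e'(\lambda_{n}^{E}(S(u)))=e'(u(\lambda_{n}^{\K}))$ for all $e'\in E'$, and since $E$ is Hausdorff this gives \eqref{eq:schauder_decomp}. \prettyref{cor:schauder_decomp} then delivers both the Schauder decomposition $(\sum_{|n|\leq k}\widehat{f}(n)e^{i\langle n,\cdot\rangle})_{k}$ and the Fourier series representation.

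I expect the main obstacle to be the $\varepsilon$-compatibility step under the mere hypothesis of local completeness, equivalently the surjectivity of $S$: while injectivity of $S$ follows routinely from barrelledness, its surjectivity is exactly the assertion that every $E$-valued smooth periodic function arises as $u(\delta_{\bullet})$. As an alternative that avoids leaning on \cite{kruse2017} for surjectivity, one could imitate \prettyref{thm:fourier.rap.dec}: build the chain $s(\Z^{d},E)\cong s(\Z^{d})\varepsilon E\cong\mathcal{C}^{\infty}_{2\pi}(\R^{d})\varepsilon E\tilde{\hookrightarrow}\mathcal{C}^{\infty}_{2\pi}(\R^{d},E)$ via $(\mathscr{F}^{\K})^{-1}\varepsilon\id_{E}$ and show the composite is onto with right inverse $f\mapsto(\widehat{f}(n))_{n}$, thereby producing surjectivity of $S$ and the decomposition simultaneously from \prettyref{prop:vector_valued_seq_spaces} and \prettyref{prop:reverse_Schauder}; the caveat is that the first isomorphism rests on \prettyref{thm:sequence.spaces}, stated for sequentially complete $E$, so this route would a priori need upgrading to cover the locally complete case. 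A minor secondary point is the ordering of the index set $\Z^{d}$: since the trigonometric basis is unconditional in this nuclear setting, the ball-ordering $\{|n|\leq k\}$ used in the statement is legitimate.
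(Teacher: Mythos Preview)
Your proposal is correct and follows essentially the same route as the paper: verify the three hypotheses of \prettyref{cor:schauder_decomp} and conclude. The only differences are cosmetic: the paper establishes $\varepsilon$-compatibility more concretely by invoking the known isomorphism $S_{\mathcal{C}^{\infty}(\R^{d})}$ from \cite{B/F/J} and then passing to the periodic subspace via \cite[3.26 Proposition (i)]{kruse2017} (which resolves the surjectivity worry you flag), and it verifies \eqref{eq:schauder_decomp} through the explicit formula $S^{-1}=R^{t}$ from \cite[3.17 Theorem]{kruse2017}, which is exactly your Mackey--Arens identification $e'\circ u\leftrightarrow e'\circ S(u)$ written out as a named map.
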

\begin{proof}
First, we prove that $\mathcal{C}^{\infty}_{2\pi}(\R^{d})$ and $\mathcal{C}^{\infty}_{2\pi}(\R^{d},E)$ 
are $\varepsilon$-compatible. It follows from \cite[p.\ 228]{B/F/J} (cf.\ \cite[3.23 Example, p.\ 15]{kruse2017}) 
that $S_{\mathcal{C}^{\infty}(\R^{d})}\colon \mathcal{C}^{\infty}(\R^{d})\varepsilon E\to \mathcal{C}^{\infty}(\R^{d},E)$ 
is an isomorphism. 
Furthermore, for each $x\in\R^{d}$ and $1\leq n\leq d$ we have $\delta_{x}=\delta_{x+2\pi e_{n}}$ in $\mathcal{C}^{\infty}_{2\pi}(\R^{d})'$ and thus
\[
 S_{\mathcal{C}^{\infty}_{2\pi}(\R^{d})}(u)(x)-S_{\mathcal{C}^{\infty}_{2\pi}(\R^{d})}(u)(x+2\pi e_{n})
 =u(\delta_{x}-\delta_{x+2\pi e_{n}})=0,\quad u\in\mathcal{C}^{\infty}_{2\pi}(\R^{d})\varepsilon E,
\]
implying that $S_{\mathcal{C}^{\infty}_{2\pi}(\R^{d})}(u)$ is $2\pi$-periodic in each variable. 
In addition, we observe that $e'\circ f$ is $2\pi$-periodic in each 
variable for all $e'\in E'$ and $f\in\mathcal{C}^{\infty}_{2\pi}(\R^{d},E)$. 
An application of \cite[3.26 Proposition (i), p.\ 17]{kruse2017} yields that 
$S_{\mathcal{C}^{\infty}_{2\pi}(\R^{d})}\colon \mathcal{C}^{\infty}_{2\pi}(\R^{d})\varepsilon E
\to \mathcal{C}^{\infty}_{2\pi}(\R^{d},E)$ is an isomorphism, i.e.\ 
$\mathcal{C}^{\infty}_{2\pi}(\R^{d})$ and $\mathcal{C}^{\infty}_{2\pi}(\R^{d},E)$ 
are $\varepsilon$-compatible.

The space $\mathcal{C}^{\infty}_{2\pi}(\R^{d})$ is barrelled since it is a Fr\'{e}chet space 
and thus its Schauder basis $(e^{i\langle n,\cdot\rangle})$ is equicontinuous. 
By \cite[3.17 Theorem, p.\ 12]{kruse2017} the inverse of $S_{\mathcal{C}^{\infty}_{2\pi}(\R^{d})}$ is given by 
\[
 R^{t}\colon \mathcal{C}^{\infty}_{2\pi}(\R^{d},E) \to \mathcal{C}^{\infty}_{2\pi}(\R^{d}) \varepsilon E,
 \;f\mapsto \mathcal{J}^{-1}\circ R_{f}^{t},
\]
where $\mathcal{J}\colon E\to E'^{\star}$ is the canonical injection in the algebraic dual $E'^{\star}$ of $E'$ and 
\[
 R_{f}^{t}\colon \mathcal{C}^{\infty}_{2\pi}(\R^{d})'\to E'^{\star},\;y\longmapsto \bigl[e'\mapsto y(e'\circ f) \bigr],
\]
for $f\in\mathcal{C}^{\infty}_{2\pi}(\R^{d},E)$.
Let $u\in\mathcal{C}^{\infty}_{2\pi}(\R^{d}) \varepsilon E$
Then $f:=S_{\mathcal{C}^{\infty}_{2\pi}(\R^{d})}(u)\in\mathcal{C}^{\infty}_{2\pi}(\R^{d},E)$ and from 
the Pettis-integrability of $f$ we obtain
\begin{align*}
 R_{f}^{t}(\mathfrak{F}^{\C}_{n})(e')
 &=\mathfrak{F}^{\C}_{n}(e'\circ f)
 =(2\pi)^{-d}\int_{[-\pi,\pi]^{d}}\langle e',f(x)e^{-i\langle n,x\rangle}\rangle\d x\\
 &=\langle e',(2\pi)^{-d}\int_{[-\pi,\pi]^{d}}f(x)e^{-i\langle n,x\rangle}\d x \rangle
 =\langle e', \mathfrak{F}^{E}_{n}(f)\rangle,\quad e'\in E',
\end{align*}
for all $n\in\Z^{d}$ which results in
\[ 
u(\mathfrak{F}^{\C}_{n})=S_{\mathcal{C}^{\infty}_{2\pi}(\R^{d})}^{-1}(f)(\mathfrak{F}^{\C}_{n})
=\mathcal{J}^{-1}\bigl(R_{f}^{t}(\mathfrak{F}^{\C}_{n})\bigr)
=\mathfrak{F}^{E}_{n}(f)=\mathfrak{F}^{E}_{n}(S_{\mathcal{C}^{\infty}_{2\pi}(\R^{d})}(u))
\]
and thus shows the validity of \eqref{eq:schauder_decomp}. 
Now, our statement follows from \prettyref{cor:schauder_decomp}.
\end{proof}

Considering the coefficients in the series expansion above, we know that the map 
\[
 \mathfrak{F}^{\C}\colon\mathcal{C}^{\infty}_{2\pi}(\R^{d})\to s(\Z^{d}),\;
 \mathfrak{F}^{\C}(f):=\bigl(\widehat{f}(n)\bigr)_{n\in\Z^{d}},
\]
is an isomorphism (see e.g.\ \cite[Satz 1.7, p.\ 18]{Kaballo}). Thus we have the following relation 
if $E$ is a locally complete Hausdorff space over $\C$ 
\[
 \mathcal{C}^{\infty}_{2\pi}(\R^{d},E) \cong \mathcal{C}^{\infty}_{2\pi}(\R^{d})\varepsilon E
 \cong  s(\Z^{d})\varepsilon E \tilde{\hookrightarrow} s(\Z^{d},E)
\]
where the first isomorphism is $S^{-1}_{\mathcal{C}^{\infty}_{2\pi}(\R^{d})}$, 
the second is the map $\mathfrak{F}^{\C}\varepsilon\id_{E}$ 
and the third isomorphism into is the map $S_{s(\Z^{d})}$ by \cite[3.9 Theorem, p.\ 9]{kruse2017}. 
We can explicitely compute the composition of these maps. 
With the notation from the proof above we have for every $f\in\mathcal{C}^{\infty}_{2\pi}(\R^{d},E)$ 
and $n\in\Z^{d}$
\begin{flalign*}
 &\quad\bigl(S_{s(\Z^{d})}\circ (\mathfrak{F}^{\C}\varepsilon\id_{E})
 \circ S^{-1}_{\mathcal{C}^{\infty}_{2\pi}(\R^{d})}\bigr)(f)(n)\\
 &=S_{s(\Z^{d})}\Bigl(\bigl(\mathfrak{F}^{\C}\varepsilon\id_{E}\bigr)
 \bigl(S^{-1}_{\mathcal{C}^{\infty}_{2\pi}(\R^{d})}(f)\bigr)\Bigr)(n)
 =S_{s(\Z^{d})}\bigl(S^{-1}_{\mathcal{C}^{\infty}_{2\pi}(\R^{d})}(f)\circ (\mathfrak{F}^{\C})^{t}\bigr)(n)\\
 &=S^{-1}_{\mathcal{C}^{\infty}_{2\pi}(\R^{d})}(f)\bigl((\mathfrak{F}^{\C})^{t}(\delta_{n})\bigr)
 =S^{-1}_{\mathcal{C}^{\infty}_{2\pi}(\R^{d})}(f)(\mathfrak{F}^{\C}_{n})
 =\mathfrak{F}^{E}_{n}(f)=\widehat{f}(n).
\end{flalign*}
Thus the map 
\[
 \mathfrak{F}^{E}\colon\mathcal{C}^{\infty}_{2\pi}(\R^{d},E)\to s(\Z^{d},E),\;
 \mathfrak{F}^{E}(f):=\bigl(\widehat{f}(n)\bigr)_{n\in\Z^{d}},
\]
is well-defined and an isomorphism into if $E$ is locally complete. If $E$ is sequentially complete, 
it is even an isomorphism to the whole space $s(\Z^{d},E)$ because 
$S_{s(\Z^{d})}$ is surjective to the whole space then 
by \prettyref{thm:sequence.spaces} (i). Hence we have:

\begin{thm}\label{thm:fourier.periodic}
If $E$ is a sequentially complete lcHs over $\C$, 
then $\mathfrak{F}^{E}\colon\mathcal{C}^{\infty}_{2\pi}(\R^{d},E)\to s(\Z^{d},E),\;
\mathfrak{F}^{E}(f):=\bigl(\widehat{f}(n)\bigr)_{n\in\Z^{d}}$, is an isomorphism 
and 
\[
\mathfrak{F}^{E}=S_{s(\Z^{d})}\circ (\mathfrak{F}^{\C}\varepsilon\id_{E})\circ S^{-1}_{\mathcal{C}^{\infty}_{2\pi}(\R^{d})}.
\]
\end{thm}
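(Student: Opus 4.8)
The plan is to exhibit $\mathfrak{F}^{E}$ as a composition of three maps, each of which is a (linear topological) isomorphism, and thereby obtain both the asserted factorisation and the isomorphy in one stroke. Since a sequentially complete space is locally complete, all the prerequisites invoked below are available under the hypothesis on $E$.

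First I would record that each of the three factors
\[
S^{-1}_{\mathcal{C}^{\infty}_{2\pi}(\R^{d})},\quad \mathfrak{F}^{\C}\varepsilon\id_{E},\quad S_{s(\Z^{d})}
\]
is an isomorphism. The first is an isomorphism by \prettyref{thm:fourier}, in whose proof the $\varepsilon$-compatibility of $\mathcal{C}^{\infty}_{2\pi}(\R^{d})$ and $\mathcal{C}^{\infty}_{2\pi}(\R^{d},E)$ for locally complete $E$ was established, so that $S_{\mathcal{C}^{\infty}_{2\pi}(\R^{d})}$ and hence its inverse are isomorphisms. For the middle factor I would use that $\mathfrak{F}^{\C}\colon\mathcal{C}^{\infty}_{2\pi}(\R^{d})\to s(\Z^{d})$ is an isomorphism (recalled in the discussion preceding the statement) together with the fact, noted in the preliminaries, that the $\varepsilon$-product of an isomorphism with the identity is again an isomorphism, here with inverse $(\mathfrak{F}^{\C})^{-1}\varepsilon\id_{E}$. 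The third factor $S_{s(\Z^{d})}$ is in general only an isomorphism into, but under sequential completeness it is surjective onto all of $s(\Z^{d},E)$ by \prettyref{thm:sequence.spaces} (i); this is the one place where the full strength of the hypothesis is needed and is the crux of the argument.

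Finally I would verify the factorisation identity
\[
\mathfrak{F}^{E}=S_{s(\Z^{d})}\circ (\mathfrak{F}^{\C}\varepsilon\id_{E})\circ S^{-1}_{\mathcal{C}^{\infty}_{2\pi}(\R^{d})}
\]
by evaluating the right-hand side on $f\in\mathcal{C}^{\infty}_{2\pi}(\R^{d},E)$ at an index $n\in\Z^{d}$: unwinding the definitions of $S^{-1}_{\mathcal{C}^{\infty}_{2\pi}(\R^{d})}$ and $S_{s(\Z^{d})}$ reduces the value to $S^{-1}_{\mathcal{C}^{\infty}_{2\pi}(\R^{d})}(f)$ applied to $(\mathfrak{F}^{\C})^{t}(\delta_{n})=\mathfrak{F}^{\C}_{n}$, and the Pettis-integral identity $R^{t}_{f}(\mathfrak{F}^{\C}_{n})(e')=\langle e',\widehat{f}(n)\rangle$ from the proof of \prettyref{thm:fourier} identifies this with $\widehat{f}(n)=\mathfrak{F}^{E}(f)(n)$. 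Since the three factors are isomorphisms their composition is, and by the identity so is $\mathfrak{F}^{E}$. The only genuine obstacle is the surjectivity of $S_{s(\Z^{d})}$ onto the whole space in the second step; the factorisation computation itself is routine bookkeeping, most of which has already been carried out in the discussion preceding the statement.
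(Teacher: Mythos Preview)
Your proposal is correct and follows essentially the same approach as the paper: the paper establishes the theorem in the discussion immediately preceding it by showing that the composition $S_{s(\Z^{d})}\circ (\mathfrak{F}^{\C}\varepsilon\id_{E})\circ S^{-1}_{\mathcal{C}^{\infty}_{2\pi}(\R^{d})}$ equals $\mathfrak{F}^{E}$ pointwise via the same Pettis-integral computation, and then upgrades the isomorphism-into $S_{s(\Z^{d})}$ to a surjection using \prettyref{thm:sequence.spaces} (i) under sequential completeness, exactly as you do.
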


For quasi-complete $E$ \prettyref{thm:fourier} and \prettyref{thm:fourier.periodic} are already known 
by \cite[Satz 10.8, p.\ 239]{Kaballo}.

\subsection*{Acknowledgement}
The author is deeply grateful to Jos\'e Bonet for many helpful suggestions honing the present paper. 
The main \prettyref{thm:schauder_decomp} is essentially due to him improving a previous version of 
the author which became \prettyref{cor:schauder_decomp}.

\bibliography{biblio}
\bibliographystyle{plain}
\end{document}